\tikzstyle{myarrow}=[->, >=open triangle 90, thick]
\tikzstyle{line} = [draw, -latex']
\tikzstyle{Vertex} = [draw, circle, minimum width = .25cm, text width=.25cm,  text centered]
\tikzstyle{box} = [draw, rectangle, minimum width = 1.5cm, text width=.25cm,  text centered]
\newtheorem{theorem}{Theorem}[section]
\newtheorem{lemma}[theorem]{Lemma}
\newtheorem{corollary}[theorem]{Corollary}
\theoremstyle{definition}
\newtheorem{definition}[theorem]{Definition}
\theoremstyle{remark}
\newtheorem{remark}[theorem]{Remark}
\numberwithin{equation}{section}
\begin{document}
	
	
	\title[CP Maps on Hilbert modules over Pro C*-algebras]{Completely Positive Maps: Pro-$C^*$-algebras and Hilbert Modules over Pro-$C^*$-algebras}


	\author{Bhumi Amin}
	\address{Department of Mathematics, IIT Hyderabad, Telangana, India - 502285 }
	\email{ma20resch11008@iith.ac.in}
	
	\author{Ramesh Golla}
	\address{Department of Mathematics, IIT Hyderabad, Telangana, India - 502285}
	
	\email{rameshg@math.iith.ac.in}

	
	\subjclass[2020]{Primary: 46L05,\;46L08;\; Secondary: 46K10}
	
	
	
	\keywords{completely positive maps, pro-$C^*$-algebra, Hilbert modules, Stinesping's dilation}
	
	\begin{abstract}
		In this paper, we describe the structure of completely positive maps between two pro-$C^*$-algebras using Paschke's GNS construction for CP-maps on pro-$C^*$-algebras. Furthermore, we establish a structure theorem for a $\phi$-map between two Hilbert modules over pro-$C^*$-algebras, where $\phi$ is a continuous CP-map between pro-$C^*$-algebras. We also discuss the minimality of these representations.
	\end{abstract}
	
	\maketitle

		\section{Introduction}
	Completely positive maps have numerous applications in modern mathematics, including quantum information theory, statistical physics, and stochastic processes. Stinespring's representation theorem stands as a cornerstone in the study of completely positive maps, a vital concept within the theory of $C^*$-algebras. It is a structure theorem that shows how to represent completely positive maps from a $C^*$-algebra into the $C^*$-algebra of bounded operators on a Hilbert space. 
	\begin{theorem}\cite{WS,VPaulsen}
		Let $\mathcal{A}$ be a unital $C^*$-algebra and $H$ be a Hilbert space. Let $\phi: \mathcal{A} \rightarrow B(H)$ be a completely positive map. Then there exists a Hilbert space $K$, a bounded linear operator $V:H\rightarrow K$, and a $^*-$representation $\pi: A \rightarrow B(K)$ such that 
		$$\phi(a) = V^*\pi(a)V,$$
		for all $a \in \mathcal{A}.$
	\end{theorem}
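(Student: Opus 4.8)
The plan is to prove the classical Stinespring dilation theorem for a completely positive map $\phi:\mathcal{A}\to B(H)$ by performing a GNS-type construction on the algebraic tensor product $\mathcal{A}\odot H$. First I would define a sesquilinear form on $\mathcal{A}\odot H$ by setting $\langle a\otimes\xi,\,b\otimes\eta\rangle := \langle \xi,\,\phi(a^*b)\eta\rangle_H$ and extending by linearity. The crucial observation is that complete positivity of $\phi$ is precisely what guarantees this form is nonnegative: for a finite sum $u=\sum_i a_i\otimes\xi_i$, one computes $\langle u,u\rangle = \sum_{i,j}\langle\xi_i,\phi(a_i^*a_j)\xi_j\rangle$, and this equals $\langle \xi,\,[\phi(a_i^*a_j)]\,\xi\rangle$ where $[\phi(a_i^*a_j)]$ is the $n\times n$ operator matrix obtained by applying $\phi^{(n)}$ to the positive matrix $[a_i^*a_j]=[a_i]^*[a_j]\in M_n(\mathcal{A})$. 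Since $\phi$ is completely positive, $\phi^{(n)}$ maps positive matrices to positive operators, so $\langle u,u\rangle\ge 0$.

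Next I would pass to the Hilbert space. Let $N=\{u\in\mathcal{A}\odot H:\langle u,u\rangle=0\}$ be the null space of the form; by the Cauchy--Schwarz inequality (which holds for any nonnegative sesquilinear form) $N$ is a subspace, and the form descends to a genuine inner product on the quotient $(\mathcal{A}\odot H)/N$. I would then define $K$ to be the Hilbert space completion of this quotient. The representation $\pi:\mathcal{A}\to B(K)$ is defined on elementary tensors by $\pi(a)(b\otimes\xi):=(ab)\otimes\xi$; I would check that $\pi(a)$ leaves $N$ invariant, is well defined on the quotient, and is bounded (using $a^*b^*ba\le\|b\|^2 a^*a$ together with complete positivity to estimate $\|\pi(b)u\|\le\|b\|\,\|u\|$), so it extends to a bounded operator on $K$. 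Routine computations then show $\pi$ is linear, multiplicative, and $^*$-preserving, i.e.\ a $^*$-representation.

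Finally I would construct the operator $V:H\to K$ using the unit of $\mathcal{A}$: define $V\xi:=[\,1\otimes\xi\,]$, the class of $1\otimes\xi$ in $K$. This is bounded since $\|V\xi\|^2=\langle\xi,\phi(1)\xi\rangle\le\|\phi(1)\|\,\|\xi\|^2$. The two defining identities then fall out by direct calculation: for the intertwining relation, $\langle V^*\pi(a)V\xi,\eta\rangle = \langle \pi(a)(1\otimes\xi),\,1\otimes\eta\rangle = \langle\xi,\phi(1^*a1)\eta\rangle = \langle\phi(a)\xi,\eta\rangle$, which gives $V^*\pi(a)V=\phi(a)$ for all $a\in\mathcal{A}$.

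I expect the main obstacle to be verifying the boundedness of $\pi(a)$, since this is the one step that genuinely requires complete positivity applied in a uniform way rather than a single positivity check: one must show $\langle\pi(b)u,\pi(b)u\rangle\le\|b\|^2\langle u,u\rangle$, which follows by applying $\phi^{(n)}$ to the matrix inequality $[a_i]^*(b^*b)[a_j]\le\|b\|^2[a_i]^*[a_j]$ in $M_n(\mathcal{A})$. Everything else — nonnegativity of the form, well-definedness on the quotient, and the final intertwining identity — is comparatively mechanical once the tensor-product framework is set up, and the role of the unit in $\mathcal{A}$ is simply to furnish the embedding $V$.
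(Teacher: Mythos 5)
Your proof is correct and is exactly the classical Stinespring construction: the paper states this theorem without proof, citing \cite{WS,VPaulsen}, and your argument is the standard one found in those references (GNS-type form on $\mathcal{A}\odot H$, positivity via $\phi^{(n)}$ applied to $[a_i^*a_j]$, boundedness of $\pi(b)$ from the matrix inequality $[a_i^*b^*ba_j]\le\|b\|^2[a_i^*a_j]$, and $V\xi=[1\otimes\xi]$). It is also precisely the template that the paper's own pro-$C^*$-algebra constructions (Lemma 3.3 and Theorems 3.6, 3.9) generalize, so your approach matches the paper's framework as closely as possible.
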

	One can view Stinespring's representation theorem as a natural extension of the renowned Gelfand-Naimark-Segal theorem for states on $C^*$-algebras (see \cite[pg. 278]{KR}). In 1972, Paschke (in \cite{Pash1}) initially gave the structure theorem for completely positive maps from a $U^*$-algebra to the $C^*$-algebra of bounded operators. Later, he generalized his work to completely positive maps from a $U^*$-algebra to a $C^*$-algebra (see \cite{Pash2} for more details).

	Hilbert modules over $C^*$-algebras extend the concept of Hilbert space by allowing the inner product to have values in a $C^*$-algebra. The idea of a Hilbert module over a unital, commutative $C^*$-algebra was initially introduced by Kaplansky in \cite{KI}. Asadi, in \cite{AMB}, provided a Stinespring-like representation for operator-valued completely positive maps on Hilbert $C^*$-modules.  In 2012, Bhat, Ramesh, and Sumesh enhanced Asadi's result by eliminating a technical condition (see \cite{BRS}). Using induced representations of Hilbert $C^*$-modules, Skeide (in \cite{MS2}) obtained a factorization theorem generalizing \cite[Theorem 2.1]{BRS}.
	
	In 1971, A. Inoue introduced locally $C^*$-algebras to generalize the concept of $C^*$-algebras (see \cite{AI} for more details). A complete topological involutive algebra with a topology induced by a family of semi-norms is called a locally $C^*$-algebra. It has been explored with alternative names, one such being ``pro-$C^*$-algebra", a term we will use throughout the paper. Later,  in 1988, Phillips considered them as projective limits within inverse families of $C^*$-algebras (see \cite{NCP}).
	
	The theory of completely positive maps on pro-$C^*$-algebras has been extensively investigated and documented in the book \cite{MJbook}, and in the paper \cite{MJ}, both authored by Joita. In 2012, Maliev and Pliev extended the techniques from \cite{BRS}, originally applied to $C^*$-algebras, to establish a Stinespring theorem for Hilbert modules over pro-$C^*$-algebras using the theory of local Hilbert spaces. In 2017, Karimi and Sharifi generalized Paschke's GNS construction to completely positive maps on pro-$C^*$-algebras. Using this generalization, they derived an analogue of Stinespring theorem for Hilbert modules over pro-$C^*$-algebras (refer to \cite{KS} for further details). 
	
	In this paper, we begin by introducing key definitions and fundamental concepts regarding pro-$C^*$-algebras and Hilbert modules over them. We employ continuous functional calculus for pro-$C^*$-algebras in certain computations (see \cite[Proposition 1.1.14]{MJbook} for more details). With the help of this, along with Paschke's GNS construction for pro-$C^*$-algebras, we derive a variant of Stinespring's theorem. Lastly, using a generalization of \cite[Lemma 1]{KS}, we obtain a Stinespring-like theorem for Hilbert modules over pro-$C^*$-algebras. Note that the differentiating factor here is that the characterization is given for maps between two pro-$C^*$-algebras and for maps between two Hilbert modules over respective pro-$C^*$-algebras using dilated spaces.

	\section{Preliminaries}

	Throughout the paper, we consider unital algebras over complex field. Let us first look into the definitions of pro-$C^*$-algebras and Hilbert modules over them.
	
	\begin{definition}\cite{AI}
		A $^*-$algebra $\mathcal{A}$ is called a pro-$C^*$-algebra if there exists a family $\{p_j\}_{j\in J}$ of semi-norms defined on $\mathcal{A}$ such that: 
		\begin{enumerate}
			\item $\{p_j\}_{j\in J}$ defines a complete Hausdorff locally convex topology on $\mathcal{A}$.
			\item $p_j(x y) \leq p_j(x) p_j(y),$ for all $x, y \in \mathcal{A}$ and each $j \in J$.
			\item $p_j(x^*x) = p_j(x)^2,$  for all $x \in \mathcal{A}$  and each $j \in J$.
		\end{enumerate}
	\end{definition}
	We call the family $\{p_j\}_{j\in J}$ of semi-norms defined on $\mathcal{A}$ as the family of $C^*$-semi-norms.
	
	Let $S(\mathcal{A})$ denote the set of all continuous $C^*$-semi-norms on $\mathcal{A}.$
	\begin{definition}\cite{NCP}
		Let $\mathcal{A}$ be a pro-$C^*$-algebra and $E$ be a complex vector space which is also a right $\mathcal{A}$-module. Then $E$ is said to be a pre-Hilbert $\mathcal{A}$-module if it is equipped with an $\mathcal{A}-$valued inner product $\langle .,. \rangle : E \times E \rightarrow \mathcal{A}$ which is $\mathbb{C}-$linear and $\mathcal{A}-$linear in the second variable and it satisfies the following conditions:
		\begin{enumerate}
			\item $\langle \xi, \eta \rangle^* = \langle \eta, \xi \rangle,$ for all $\xi, \eta \in E.$
			\item $\langle \xi, \xi \rangle \geq 0,$ for all $\xi \in E.$
			\item For any $\xi \in E,$ $\langle \xi, \xi \rangle = 0$ if and only if $\xi = 0.$
		\end{enumerate}
		We say that $E$ is a Hilbert $\mathcal{A}$-module if $E$ is complete with respect to the topology induced by the $C^*$-semi-norms $\{\|.\|_p\}_{p \in S(\mathcal{A})},$ 
		where, for any $\xi \in E$, 
		$$\|\xi\|_p := \sqrt{p(\langle \xi, \xi \rangle)}.$$
	\end{definition}
	We use the notation $\|.\|_{p_E}$ instead of $\|.\|_p$ when working with multiple Hilbert modules over the same pro-$C^*$-algebra.
	
	Next, let us briefly revisit the definition of a completely positive map on pro-$C^*$-algebras.
	\begin{definition}
		Let $\mathcal{A}$ and $\mathcal{B}$ be two pro-$C^*$-algebras. A linear map $\phi: \mathcal{A} \rightarrow \mathcal{B}$ is said to be completely positive (or CP), if for all $n \in \mathbb{N}, \phi^{(n)}: M_n(\mathcal{A}) \rightarrow M_n(\mathcal{B})$ defined by $$\phi^{(n)}([a_{ij}]_{i,j=1}^n) = [\phi(a_{ij})]_{i,j=1}^n$$ is positive.
	\end{definition}
	
	\begin{definition}\cite{MJbook}
		Let $\mathcal{A}$ and $\mathcal{B}$ be two pro-$C^*$-algebras. A $^*$-morphism from $\mathcal{A}$ to $\mathcal{B}$ is a linear map $\phi: \mathcal{A} \rightarrow \mathcal{B}$ such that:
		\begin{itemize}
			\item[(1)] $\phi(ab) = \phi(a)\phi(b),$ for all $a, b \in \mathcal{A}$ 
			\item[(2)] $\phi(a^*) = \phi(a)^*,$ for all $a \in \mathcal{A}.$
		\end{itemize}
	\end{definition}
	
	Later, we delve deeper into the following version of a renowned definition, incorporating modifications as necessary.
	\begin{definition}
		Let $\mathcal{A}$ and $\mathcal{B}$ be pro-$C^*$-algebras. Let $E$ be a Hilbert $\mathcal{A}$-module and $F$ be a Hilbert $\mathcal{B}$-module.

		Let $\phi: \mathcal{A} \rightarrow \mathcal{B}$ be a linear map. A map $\Phi: E \rightarrow F$ is said to be 
		\begin{enumerate}
			\item a $\phi$-map if $$\langle \Phi(x), \Phi(y) \rangle = \phi(\langle x , y \rangle), $$ for all $x, y \in E$.
			\item a $\phi$-morphism if $\Phi$ is a $\phi$-map and $\phi$ is a $^*$-morphism.
			\item  completely positive if $\Phi$ is a $\phi$-map and $\phi$ is completely positive.
		\end{enumerate}
	\end{definition}

	The set $\langle E, E \rangle$ is the closure of the linear span of $\{\langle x,y\rangle: x,y \in E\}.$ If $\langle E, E \rangle = \mathcal{A}$ then $E $ is said to be full.

	Let $E$ and $F$ be Hilbert modules over a pro-$C^*$-algebra $\mathcal{B}.$ We say that a map $T: E \rightarrow F$ is adjointable if there exists a map $T^*: F \rightarrow E$ satisfying \begin{equation*}
		\langle T\xi, \eta \rangle = \langle \xi, T^*\eta \rangle,
	\end{equation*}
	for all $\xi\in E$ and $\eta\in F$.
	Let $\mathcal{L}_\mathcal{B}(E,F)$ denote the set consisting of all continuous adjointable $\mathcal{B}$-module operators from $E$ to $F$. We denote $\mathcal{L}_\mathcal{B}(E,E)$ by $\mathcal{L}_\mathcal{B}(E).$  We can define inner-product on $\mathcal{L_B}(E,F)$ by 
	\begin{equation*}
		\langle T, S \rangle := T^*S,\; \text{ for}\; T, S \in \mathcal{L}_\mathcal{B}(E,F).
	\end{equation*}
	
	Note that $\mathcal{L}_\mathcal{B}(E,F)$ is a Hilbert $\mathcal{L}_\mathcal{B}(E)$-module with the module action 
	
	\begin{equation*}
		(T,S) \rightarrow TS, \; \text{ for} \; T \in \mathcal{L}_\mathcal{B}(E,F)\; \text{ and} \;S \in {L}_\mathcal{B}(E).
	\end{equation*}

    \begin{remark}
        An operator $T \in \mathcal{L}_\mathcal{B}(E,F)$ is said to be an isometry if $T^*T = I_E.$ Here $I_E$ denotes the identity operator on $E$,
    \end{remark}

	\begin{lemma}\cite[Lemma 2.1.3]{MJbook}
		Let $\mathcal{B}$ be a pro-$C^*$-algebra, and $E$ and $F$ be Hilbert $\mathcal{B}$-modules. Let $p \in S(\mathcal{B}).$ The map $\|.\|_{\tilde{p}} : \mathcal{L}_\mathcal{B}(E,F) \rightarrow [0,\infty)$ defined by 
		$$ \|T\|_{\tilde{p}} = sup\{\|Tx\|_{p_F}: \|x\|_{p_E}\leq 1, x \in E\}$$
		is a seminorm on $\mathcal{L}_\mathcal{B}(E,F).$ Moreover, $\tilde{p}$ is submultiplicative if $E=F.$
	\end{lemma}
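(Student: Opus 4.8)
The plan is to reduce the assertion to the triangle inequality and homogeneity of the module seminorm $\|\cdot\|_{p_F}$ on $F$, together with an elementary operator estimate; I read the defining quantity as $\|Tx\|_{p_F} = \sqrt{p(\langle Tx, Tx\rangle)}$, so that $\|T\|_{\tilde p} = \sup\{\|Tx\|_{p_F} : \|x\|_{p_E} \le 1\}$ is the natural operator seminorm. First I would record the Cauchy--Schwarz inequality for the $\mathcal{B}$-valued inner product relative to a fixed $C^*$-seminorm $p$, namely $p(\langle \xi, \eta\rangle) \le \|\xi\|_{p_F}\,\|\eta\|_{p_F}$ for $\xi, \eta \in F$. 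Expanding $\langle \xi+\eta, \xi+\eta\rangle$, using subadditivity of $p$ and the identity $\langle \eta,\xi\rangle = \langle \xi,\eta\rangle^*$ with $p(x^*) = p(x)$, this yields the triangle inequality $\|\xi+\eta\|_{p_F} \le \|\xi\|_{p_F} + \|\eta\|_{p_F}$ and the homogeneity $\|\lambda\xi\|_{p_F} = |\lambda|\,\|\xi\|_{p_F}$, so that $\|\cdot\|_{p_F}$ is a genuine seminorm on $F$ (and likewise $\|\cdot\|_{p_E}$ on $E$).

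The step I expect to be the main obstacle is finiteness of the supremum. Since $T$ is adjointable, $p(\langle Tx, Tx\rangle) = p(\langle x, T^*Tx\rangle) \le \|x\|_{p_E}\,\|T^*Tx\|_{p_E}$ by Cauchy--Schwarz, which already shows that $\|x\|_{p_E}=0$ forces $\|Tx\|_{p_F}=0$. Hence $T$ descends to a well-defined $\mathcal{B}_p$-linear map $T_p$ from the local Hilbert $C^*$-module $E_p = \overline{E/N_p}$ into $F_p$, where $N_p = \{x \in E : \|x\|_{p_E}=0\}$ and $\mathcal{B}_p = \overline{\mathcal{B}/\ker p}$, with adjoint induced by $T^*$. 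Adjointable operators between Hilbert $C^*$-modules are automatically bounded, so $\|T_p\| < \infty$; and since the $C^*$-norm on $\mathcal{B}_p$ satisfies $\|b + \ker p\|_{\mathcal{B}_p} = p(b)$, one checks $\|T\|_{\tilde p} = \|T_p\|$. This simultaneously gives finiteness and lets all the remaining assertions be imported from the operator norm on $\mathcal{L}_{\mathcal{B}_p}(E_p, F_p)$.

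If one prefers to argue directly on $E$ and $F$, homogeneity of $\|\cdot\|_{\tilde p}$ is immediate from $\langle \lambda Tx, \lambda Tx\rangle = |\lambda|^2\langle Tx, Tx\rangle$, and subadditivity follows by taking suprema in $\|(S+T)x\|_{p_F} \le \|Sx\|_{p_F} + \|Tx\|_{p_F}$, which is just the triangle inequality from the first paragraph. For submultiplicativity when $E=F$, I would first upgrade the definition to the global estimate $\|Ty\|_{p_E} \le \|T\|_{\tilde p}\,\|y\|_{p_E}$ for every $y \in E$: when $\|y\|_{p_E} > 0$ this is homogeneity applied to $y/\|y\|_{p_E}$, and when $\|y\|_{p_E}=0$ it follows from the adjointability computation above. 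Then $\|TSx\|_{p_E} \le \|T\|_{\tilde p}\,\|Sx\|_{p_E} \le \|T\|_{\tilde p}\,\|S\|_{\tilde p}\,\|x\|_{p_E}$, and taking the supremum over $\|x\|_{p_E} \le 1$ gives $\|TS\|_{\tilde p} \le \|T\|_{\tilde p}\,\|S\|_{\tilde p}$.

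In summary, the only genuinely delicate point is that an adjointable module map restricts to a bounded operator at each seminorm level; everything else is a formal consequence of the homogeneity and triangle inequalities for $\|\cdot\|_{p_E}$ and $\|\cdot\|_{p_F}$. I would therefore concentrate the work on the passage to the local module $E_p$ (or, equivalently, on the pro-$C^*$ analogue of the closed-graph argument that forces adjointable operators to be bounded), and treat the homogeneity, subadditivity, and submultiplicativity verifications as routine.
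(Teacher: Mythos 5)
A preliminary remark on the comparison itself: the paper does not prove this lemma at all --- it is quoted from \cite[Lemma 2.1.3]{MJbook} --- so your proposal can only be measured against the standard argument in that reference, which is essentially the route you take: fix $p$, pass to the quotient modules, and import everything from the Hilbert $C^*$-module setting. Two things you do are worth endorsing explicitly. First, your reading of the displayed formula is the correct one: as printed, $\|\langle Tx,Tx\rangle\|_{p_F}$ does not typecheck (the inner product lies in $\mathcal{B}$, not in $F$), and without the square root absolute homogeneity would fail, so the intended quantity must be $\sup\{\|Tx\|_{p_F}:\|x\|_{p_E}\le 1\}$. Second, your diagnosis that continuity of $T$ in the locally convex sense does not give finiteness of the fixed-$p$ supremum (continuity only bounds $\|Tx\|_p$ by a constant times $\|x\|_q$ for some possibly larger seminorm $q$), so that adjointability is the essential hypothesis, is exactly right. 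The homogeneity, subadditivity, and submultiplicativity verifications are correct and routine, as you say.

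The genuine gap is in the finiteness step, which you yourself identify as the crux. You set $E_p=\overline{E/N_p}$ and $F_p=\overline{F/N_p}$ (completions) and then invoke the fact that adjointable operators between Hilbert $C^*$-modules are automatically bounded. But the operator $T_p$ you construct is defined only on the dense submodule $E/N_p$, with adjoint defined only on the dense submodule $F/N_p$, and a densely defined operator admitting a densely defined adjoint need \emph{not} be bounded: an unbounded diagonal operator on the finitely supported sequences in $\ell^2$ is symmetric, hence ``adjointable'' in this weak sense, yet unbounded. The closed-graph (or uniform boundedness) argument behind automatic boundedness requires $T_p$ to be \emph{everywhere} defined on a complete module; extending $T_p$ to the completion by continuity is circular, since boundedness is what you are trying to prove. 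The repair is a known but nontrivial structural fact about pro-$C^*$-algebras and Hilbert modules over them, due to Phillips \cite{NCP} and recorded in \cite{MJbook}: the quotients $\mathcal{B}/\ker p$ and $E/N_p$ are \emph{already complete} in the induced norms, so no completion is needed, $T_p$ is everywhere defined on a genuine Hilbert $\mathcal{B}_p$-module, and automatic boundedness applies. You must either cite this completeness result or prove it; as written, the central step of your argument does not close, and everything downstream (including $\|T\|_{\tilde p}=\|T_p\|$ and the submultiplicativity upgrade) depends on it.
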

	
	\begin{remark}\label{K}
		In fact, if $T \in \mathcal{L_B}(E,F)$ then there exists a real constant $K \geq 0$ such that
		$\langle Tx, x \rangle \leq K \langle x,x \rangle$
		for all $x \in E$ (see \cite[Proposition 2.2.7]{MJbook} and \cite[Theorem 2.8]{Pash2}).
	\end{remark}

	\begin{definition}\cite{MJbook}
		Let $\mathcal{A}$ and $\mathcal{B}$ be  pro-$C^*$-algebras. A Hilbert $\mathcal{B}$-module $E$ is called a Hilbert $\mathcal{A}\mathcal{B}$-module if there is a non-degenerate $^*$-morphism $\tau:\mathcal{A} \rightarrow \mathcal{L}_\mathcal{B}(E)$.
		
		Hence, we identify $a.e $ with $\tau(a).e$ for all $a \in \mathcal{A}$ and $e \in E.$
	\end{definition}
	By a Hilbert $\mathcal{B}$-module we mean Hilbert (right) $\mathcal{B}$-module, for any pro-$C^*$-algebra $\mathcal{B}.$

	\section{Stinespring representation theorem}
	

	Let $\mathcal{A}$ and $\mathcal{B}$ be  pro-$C^*$-algebras. 
	\begin{definition}\label{cpPhi}
		Let $K_1$ and $K_2$ be Hilbert $\mathcal{B}$-modules.
		Let $\phi: \mathcal{A} \rightarrow \mathcal{L_B}(K_1)$ be a linear map.
		A map $\Phi: E \rightarrow \mathcal{L}_\mathcal{B}(K_1,K_2)$ is said to be 
		
		\begin{enumerate}
			\item a $\phi$-map if  $$\langle \Phi(x), \Phi(y) \rangle =  {\Phi(x)}^*\Phi(y)= \phi(\langle x , y \rangle), $$ for all $x, y \in E$.
			\item a $\phi$-morphism if $\Phi$ is a $\phi$-map and $\phi$ is a $^*$-morphism.
			\item completely positive if $\Phi$ is a $\phi$-map and $\phi$ is completely positive.
		\end{enumerate}
		
	\end{definition}
	
	Note that the map $\Phi: E \rightarrow \mathcal{L}_\mathcal{B}(K_1,K_2)$ is said to be non-degenerate if the span closure (with respect to the right action by $\mathcal{B}$) of the sets $\Phi(E)K_1$ and $\Phi(E)^*K_2$ are $K_1$ and $K_2$ respectively, that is, 
	\begin{equation*}
		[\Phi(E)K_1]_\mathcal{B} = K_2 \text{ and }  [\Phi(E)^*K_2]_\mathcal{B} = F_1.
	\end{equation*}

	\begin{definition}\cite{MJbook}
		A representation of $\mathcal{A}$ on a Hilbert $\mathcal{B}$-module $F$ is a continuous $^*$-morphism $\Gamma$ from $\mathcal{A}$ to $\mathcal{L_B}(F)$. The representation is said to be non-degenerate if $\Gamma(\mathcal{A})F$ is dense in $F.$
	\end{definition}

	
	We now state Paschke's GNS construction for completely positive maps on  pro-$C^*$-algebras, which was given by Karimi and Sharifi in \cite{KS}. 
	
	\begin{theorem}\cite[Theorem 1]{KS}\label{Paschpro} 
		Let $\mathcal{A}, \mathcal{B}$ be  pro-$C^*$-algebras and $\phi:\mathcal{A} \rightarrow \mathcal{B}$ be a continuous completely positive map. Then there exists a Hilbert $\mathcal{B}$-module $X,$ a unital continuous representation $\pi_\phi: \mathcal{A} \rightarrow \mathcal{L}_\mathcal{B}(X),$ and an element $\xi \in X$ such that 
		$$\phi(a) = \langle \xi, \pi_\phi(a) \xi \rangle,$$
		for all $a \in \mathcal{A}$. Moreover, the set $\chi_\phi = \text{span}\{\pi_\phi(a)(\xi b): a \in \mathcal{A}, b \in \mathcal{B}\}$ is a dense subspace of $X.$
	\end{theorem}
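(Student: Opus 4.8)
The plan is to transport Paschke's classical GNS construction to the pro-$C^*$-setting, the only genuinely new feature being the need to reconcile the module construction with the \emph{family} of $C^*$-semi-norms. First I would form the algebraic tensor product $\mathcal{A} \otimes \mathcal{B}$ and equip it with the $\mathcal{B}$-valued sesquilinear form determined on elementary tensors by
$$\langle a_1 \otimes b_1, a_2 \otimes b_2 \rangle := b_1^*\, \phi(a_1^* a_2)\, b_2,$$
extended so as to be conjugate-linear in the first and linear in the second variable. This is the natural candidate, as the desired identity $\phi(a) = \langle \xi, \pi_\phi(a)\xi \rangle$ forces precisely this pairing once $\xi$ is taken to be the class of $1 \otimes 1$.

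Next I would establish positivity of the form. For $z = \sum_{i=1}^n a_i \otimes b_i$ one has $\langle z, z\rangle = \sum_{i,j} b_i^*\,\phi(a_i^* a_j)\, b_j$. Since the Gram matrix $[a_i^* a_j]_{i,j=1}^n$ is positive in $M_n(\mathcal{A})$ and $\phi^{(n)}$ is positive, the matrix $[\phi(a_i^* a_j)] = \phi^{(n)}([a_i^* a_j])$ is positive in $M_n(\mathcal{B})$; compressing by the column $(b_1, \dots, b_n)^{T}$ then gives $\langle z, z\rangle \geq 0$. A Cauchy--Schwarz argument, valid for any positive $\mathcal{B}$-valued form, shows that $N_\phi := \{z : \langle z, z\rangle = 0\}$ is a $\mathcal{B}$-submodule and that the form descends to a genuine inner product on $(\mathcal{A} \otimes \mathcal{B})/N_\phi$. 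Completing this quotient with respect to the semi-norms $\|z\|_q = \sqrt{q(\langle z, z\rangle)}$, $q \in S(\mathcal{B})$, yields the Hilbert $\mathcal{B}$-module $X$.

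The representation is defined by left multiplication, $\pi_\phi(a)(a' \otimes b) := (aa') \otimes b$; linearity, multiplicativity, and the adjoint relation $\pi_\phi(a)^* = \pi_\phi(a^*)$ follow from short computations analogous to those in Lemma \ref{construction}. I expect the boundedness and continuity of $\pi_\phi(a)$ to be the main obstacle, since the $C^*$-estimate rests on $a^* a \leq \|a\|^2 1$, whose pro-$C^*$ analogue $p(a)^2 1 - a^* a \geq 0$ \emph{fails globally} in $\mathcal{A}$ (a single semi-norm $p$ controls positivity only modulo $\ker p$). To circumvent this I would argue one semi-norm at a time: for fixed $q \in S(\mathcal{B})$, continuity of $\phi$ provides $p \in S(\mathcal{A})$ with $q(\phi(a)) \leq p(a)$ for all $a$; then $\phi(\ker p) \subseteq \ker q$, so $\phi$ induces a completely positive map $\overline{\phi} : \mathcal{A}_p \to \mathcal{B}_q$ between the quotient $C^*$-algebras $\mathcal{A}_p = \mathcal{A}/\ker p$ and $\mathcal{B}_q = \mathcal{B}/\ker q$. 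Inside these honest $C^*$-algebras the classical matrix inequality $[\overline{\phi}(a_i^* a^* a\, a_j)] \leq p(a)^2\,[\overline{\phi}(a_i^* a_j)]$ holds (using $a^* a \leq p(a)^2 1$ in $\mathcal{A}_p$ and positivity of $\overline{\phi}^{(n)}$); compressing by $(b_1, \dots, b_n)^{T}$ and applying $\|\cdot\|_{\mathcal{B}_q} = q$ yields $\|\pi_\phi(a) z\|_q \leq p(a)\,\|z\|_q$. This simultaneously shows that each $\pi_\phi(a)$ extends continuously to $X$ and that $a \mapsto \pi_\phi(a)$ is continuous, so $\pi_\phi$ is a representation.

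Finally I would set $\xi := 1_\mathcal{A} \otimes 1_\mathcal{B} + N_\phi$, using unitality of both algebras. Since $\pi_\phi(1_\mathcal{A})$ fixes every class $a' \otimes b$, the representation $\pi_\phi$ is unital, and a direct computation gives $\langle \xi, \pi_\phi(a)\xi\rangle = \langle 1 \otimes 1, a \otimes 1\rangle = \phi(a)$ for all $a \in \mathcal{A}$. For the density assertion, note that $\xi b = 1 \otimes b$, whence $\pi_\phi(a)(\xi b) = a \otimes b + N_\phi$; thus $\chi_\phi$ is exactly the image of the algebraic tensor product in $X$, which is dense by construction. This completes the argument.
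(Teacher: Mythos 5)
The paper offers no proof of this statement: it is quoted, with citation, as Theorem 1 of Karimi and Sharifi \cite{KS}, so the only meaningful comparison is with that source rather than with an internal argument. Your proposal reconstructs precisely the standard Paschke-type (KSGNS) construction used there: the form $\langle a_1\otimes b_1,a_2\otimes b_2\rangle=b_1^*\phi(a_1^*a_2)b_2$ on $\mathcal{A}\otimes_{\mathrm{alg}}\mathcal{B}$, positivity via $\phi^{(n)}$ applied to Gram matrices, quotient by the null space and completion, $\pi_\phi$ by left multiplication, $\xi=1_\mathcal{A}\otimes 1_\mathcal{B}$, and density of the image of the algebraic tensor product. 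Your treatment of the genuinely pro-$C^*$ difficulty is also correct: since $a^*a\le p(a)^2 1$ fails globally, factoring through the quotient $C^*$-algebras $\mathcal{A}_p=\mathcal{A}/\ker p$ and $\mathcal{B}_q=\mathcal{B}/\ker q$ (legitimate because continuity gives $q\circ\phi\le C\,p$, hence $\phi(\ker p)\subseteq\ker q$) is the right way to obtain $\|\pi_\phi(a)z\|_q\le C'\,p(a)\,\|z\|_q$ one seminorm at a time; the only cosmetic repairs are to carry the continuity constant $C$ rather than assume $q(\phi(a))\le p(a)$ outright, and to note (or cite) that $\mathcal{A}/\ker p$ is already a $C^*$-algebra, or else pass to its completion, before invoking $C^*$-order arguments there.
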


	\section{Representation of CP-maps between Pro-$C^*$-algebras}
	
	The following theorem provides an adaptation of the Stinespring representation theorem for pro-$C^*$-algebras. 
	
	\begin{theorem}\label{prorep1}
		Let $\mathcal{A}$, $\mathcal{B}$ be  pro-$C^*$-algebras. Let $\phi: \mathcal{A} \rightarrow \mathcal{B} $ be a continuous, unital, completely positive map. Then there exists 
		\begin{enumerate}
			\item Hilbert $\mathcal{B}$-modules $D$ and $K_\phi$, 
			\item a unital representation $\pi_\phi: \mathcal{A} \rightarrow \mathcal{L}_\mathcal{B}(K_\phi),$
			\item a bounded linear operator $V_\phi \in \mathcal{L}_\mathcal{B}(D,K_\phi)$
		\end{enumerate}
		such that 
		\begin{equation*}
			\phi(a)I_D = {V_\phi}^*\pi_\phi(a)V_\phi, \; \text{for all}\; a \in \mathcal{A}.
		\end{equation*}Here $I_D$ is the identity mapping on $D$. Moreover, $[\pi_\phi(\mathcal{A})V_\phi (D)]_\mathcal{B} = K_\phi.$
	\end{theorem}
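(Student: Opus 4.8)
The plan is to apply Paschke's GNS construction (Theorem \ref{Paschpro}) to the completely positive map $\phi$ to first obtain a Hilbert $\mathcal{B}$-module $X$, a unital continuous representation $\pi_\phi:\mathcal{A}\to\mathcal{L}_\mathcal{B}(X)$, and a vector $\xi\in X$ with $\phi(a)=\langle\xi,\pi_\phi(a)\xi\rangle$ and $\chi_\phi=\operatorname{span}\{\pi_\phi(a)(\xi b)\}$ dense in $X$. Since the target of the theorem must live over $\mathcal{BB}$-modules and carry an identity $I_D$, the idea is to feed this GNS data into the tensoring construction of Lemma \ref{construction} to manufacture the required $\mathcal{BB}$-module $K_\phi$, together with an auxiliary module $D$ on which the identity $I_D$ makes sense. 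The operator $V_\phi$ should be built from the cyclic vector $\xi$, playing the role of $M_\xi$ in the construction, so that the relation $\phi(a)I_D = V_\phi^*\pi_\phi(a)V_\phi$ comes out as an operator analogue of the scalar formula $\phi(a)=\langle\xi,\pi_\phi(a)\xi\rangle$.

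Concretely, I would first fix a Hilbert $\mathcal{BB}$-module $D$ (for instance $\mathcal{B}$ itself viewed as a Hilbert module over itself, which carries the canonical left action making it a $\mathcal{BB}$-module, so that $I_D\in\mathcal{L}_\mathcal{B}(D)$). Then I would apply the induced-representation machinery of Lemma \ref{construction} with the GNS module $X$ in the role of the inner Hilbert module to produce $K_\phi := X\otimes_{\mathrm{id}} D$ (or the appropriate completed quotient), which inherits both a left and a right $\mathcal{B}$-action and is therefore a $\mathcal{BB}$-module. The representation $\pi_\phi$ of $\mathcal{A}$ on $X$ lifts to a representation on $K_\phi$ by acting on the first tensor leg, $\pi_\phi(a)(x\otimes d)=\pi_\phi(a)x\otimes d$; continuity and the $^*$-morphism property transfer from the GNS representation exactly as $\rho$ was shown to be a continuous $^*$-morphism in Lemma \ref{construction}. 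Next I would define $V_\phi:D\to K_\phi$ by $V_\phi d = \xi\otimes d$, mimicking the operator $M_\xi$; its adjointability and the formula $V_\phi^*(x\otimes d)=\langle\xi,x\rangle d$ follow exactly as in the computation of $M_x^*$ in Lemma \ref{construction}.

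With these in place the main identity is a direct computation: for $a\in\mathcal{A}$ and $d\in D$,
\begin{equation*}
V_\phi^*\pi_\phi(a)V_\phi d = V_\phi^*\bigl(\pi_\phi(a)\xi\otimes d\bigr) = \langle\xi,\pi_\phi(a)\xi\rangle d = \phi(a)d,
\end{equation*}
so that $V_\phi^*\pi_\phi(a)V_\phi=\phi(a)I_D$ as operators on $D$. The density statement $\overline{\operatorname{span}}\{\pi_\phi(\mathcal{A})V_\phi D\}=K_\phi$ should follow from the GNS density of $\chi_\phi=\operatorname{span}\{\pi_\phi(a)(\xi b)\}$ in $X$ together with the fact that elementary tensors $x\otimes d$ span a dense subset of $K_\phi$ and that $\pi_\phi(a)V_\phi d=\pi_\phi(a)\xi\otimes d$ sweeps out these generators; unitality of $\pi_\phi$ is what guarantees $\xi$ itself (hence all of $D$) is reached.

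The step I expect to be the main obstacle is the bookkeeping of the module structures, specifically verifying that $K_\phi$ genuinely carries a compatible $\mathcal{BB}$-module structure (a non-degenerate left $\mathcal{B}$-action commuting appropriately with the right action and with $\pi_\phi(\mathcal{A})$) and that $V_\phi$ is a bona fide element of $\mathcal{L}_\mathcal{B}(D,K_\phi)$ with a well-defined adjoint on all of $K_\phi$ rather than merely on elementary tensors. The estimate controlling $\|V_\phi\|_{\tilde q}$ and the continuity of $a\mapsto\pi_\phi(a)$ on $K_\phi$ must be carried out through the $C^*$-seminorms $\{p\}_{p\in S(\mathcal{B})}$ using the continuous functional calculus, exactly in the spirit of the seminorm inequalities already established in Lemma \ref{construction}; ensuring these pass correctly to the completion is where the pro-$C^*$ (as opposed to $C^*$) setting requires care.
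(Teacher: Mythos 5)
Your proposal is correct in substance, but it builds a genuinely different (and leaner) dilation than the paper does. Both arguments share the same skeleton: apply Paschke's GNS construction (Theorem \ref{Paschpro}) to obtain $(X,\tilde{\pi}_\phi,\xi)$, feed an identity $^*$-morphism into the induced-representation machinery of Lemma \ref{construction}, and take $V_\phi$ to be the creation operator attached to the cyclic vector. The difference is in what gets tensored. You take $D=\mathcal{B}$ and $K_\phi=X\otimes_{\mathrm{id}}\mathcal{B}$ (which is canonically isomorphic to $X$ itself via $x\otimes b\mapsto xb$, since $\mathcal{B}$ is unital), with $\pi_\phi=\rho\circ\tilde{\pi}_\phi$ acting on the first tensor leg. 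The paper instead takes $D$ to be the completion of $\mathcal{B}\otimes X$ under $\langle b_1\otimes x_1,b_2\otimes x_2\rangle=\langle x_1,x_2b_1^*b_2\rangle$, sets $K_\phi=D\otimes_i D$, and must additionally construct a lift $\gamma_\phi:\mathcal{L}_\mathcal{B}(X)\rightarrow\mathcal{L}_\mathcal{B}(D)$ in order to define $\pi_\phi=\rho_\phi\circ\gamma_\phi\circ\tilde{\pi}_\phi$ and $V_\phi=\eta_i(1_\mathcal{B}\otimes\xi)$. For Theorem \ref{prorep1} in isolation your route is preferable: it avoids $\gamma_\phi$ entirely, and the adjointability and boundedness of $V_\phi$ --- the point you flag as the main obstacle --- are already supplied by Lemma \ref{construction} itself, since $V_\phi=M_\xi\in\mathcal{L}_\mathcal{B}(D,K_\phi)$ by that lemma's seminorm estimates. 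What the paper's heavier scaffolding buys is compatibility with the sequel: Lemma \ref{proiso}, Remark \ref{asso} (the identification $(E\otimes_{\gamma_\phi\circ\tilde{\pi}_\phi}D)\otimes_i D=E\otimes_{\pi_\phi}(D\otimes_i D)$), and Theorem \ref{prorep2} all quote these specific $D$, $K_\phi$ and $\gamma_\phi$; your minimal $K_\phi$ would not plug into those later constructions without reworking them.

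One step you should tighten is the density claim. The operative fact is not unitality of $\pi_\phi$ but the balanced relation in the quotient, $xb\otimes b'=x\otimes bb'$ modulo $N_i$, which holds because the identity map is a $^*$-morphism. With it,
\begin{equation*}
\tilde{\pi}_\phi(a)(\xi b)\otimes b'=\bigl(\tilde{\pi}_\phi(a)\xi\bigr)b\otimes b'=\tilde{\pi}_\phi(a)\xi\otimes bb'=\pi_\phi(a)V_\phi(bb'),
\end{equation*}
so the span of $\pi_\phi(\mathcal{A})V_\phi D$ contains $\{x\otimes b':x\in\chi_\phi,\ b'\in\mathcal{B}\}$; since $\chi_\phi$ is dense in $X$, the map $x\mapsto x\otimes b'$ is continuous in each seminorm $\|\cdot\|_q$, and elementary tensors have dense span in $K_\phi$, the required density $\overline{\mathrm{span}}\{\pi_\phi(\mathcal{A})V_\phi D\}=K_\phi$ follows. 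This mirrors the paper's final step, which likewise reduces to GNS density of $\chi_\phi$.
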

	
	\begin{proof}
		Let $\tilde{\pi}_\phi$ be the Paschke's GNS construction associated with $\phi$ as in Theorem \ref{Paschpro}. That is, we get a Hilbert $\mathcal{B}$-module $X$ with a $\mathcal{B}$-valued inner product $\langle .,. \rangle_X$, an element $\xi \in X$ and a representation $\tilde{\pi}_\phi : \mathcal{A} \rightarrow \mathcal{L}_\mathcal{B}(X)$ such that $\phi(a) = \langle \xi, \tilde{\pi}_\phi(a)\xi \rangle_X$, for all $a \in \mathcal{A}$.
		
		Define $\mathcal{B}$-modules:  
\[		{D_1}' := X \otimes \mathcal{B} = [X \otimes 1_\mathcal{B}]_\mathcal{B}, \quad {D_2}' := [\xi \otimes 1_\mathcal{B}]_\mathcal{B}, \]
		with the right action given by,
		$$ (x \otimes b)\beta := x \otimes b\beta,$$
		for all $\beta, b \in 
		\mathcal{B}, x \in X.$
		Define the standard inner product on ${D_i}',$ $i= 1,2,$
		\[ 
		\langle x_1 \otimes b_1, x_2 \otimes b_2 \rangle = {b_1}^* \langle x_1, x_2 \rangle b_2, 
		\]
		for $b_1, b_2 \in \mathcal{B}$ and $x_1, x_2 \in X.$
		Then ${D_1}'$ and ${D_2}'$ are semi-Hilbert $\mathcal{B}$-modules. 
		
		Let $D_1, D_2$ denote the Hilbert $\mathcal{B}$-modules associated with ${D_1}',{D_2}'$ respectively. That is, for $i=\{1,2\}$, $D_i = \overline{D_i/N^i_{\langle .,.\rangle}}^{\langle .,.\rangle},$ where $$N^1_{\langle .,.\rangle} = \{x \otimes b \in {D_1}': \langle x \otimes b, x \otimes b \rangle = 0\}$$ and
		$$N^2_{\langle .,.\rangle} = \{\xi \otimes b \in {D_2}': \langle \xi \otimes b, \xi \otimes b \rangle = 0\}.$$
		
		
		Moreover, define a map $\tau: \mathcal{B} \rightarrow \mathcal{L}_\mathcal{B}(D_2)$ by $\beta \rightarrow L_\beta :=\tau(\beta),$ where
$$    L_\beta(\xi \otimes b) := \xi \otimes \beta b ,$$
		for all $b \in \mathcal{B}$. Then $\tau$ is a continuous $*$-morphism.
		

		Indeed, for $b_1,b_2, \beta \in \mathcal{B}$, we have
		\[
		\begin{split}
			\langle L_\beta(\xi \otimes b_1), \xi \otimes b_2 \rangle 
			&= \langle \xi \otimes \beta b_1, \xi \otimes b_2 \rangle \\
			&= (\beta b_1)^* \langle \xi, \xi \rangle b_2 \\
			&= b_1^* \beta^* b_2 \\
			&= \langle \xi \otimes b_1,  \xi \otimes \beta^* b_2 \rangle \\ 
			&= \langle \xi \otimes b_1, \beta^* \cdot (\xi \otimes b_2) \rangle.
		\end{split}
		\]
		Hence $L_{\beta}$ is adjointable with adjoint $L_{\beta}^* = L_{\beta^*}$. 
		
		Thus $D_2$ has a well-defined adjointable left $\mathcal{B}$-action, and hence we say that $D_2$ is the Hilbert $\mathcal{BB}$-module associated with ${}_\mathcal{B}[\xi \otimes 1_\mathcal{B}]_\mathcal{B}$.

	Consider $D_1 \otimes_\tau D_2$	with the inner product $\langle \cdot, \cdot \rangle^\prime : D_1 \otimes_\tau D_2 \times D_1 \otimes_\tau D_2 \rightarrow \mathcal{B}$ defined by 
	\begin{equation}\label{ipdef}
	\langle d_1 \otimes d_2, {d_1}' \otimes {d_2}' \rangle^\prime :=  \langle d_2, \tau (\langle d_1, {d_1}' \rangle) {d_2}' \rangle =  \langle d_2, \langle d_1, {d_1}' \rangle {d_2}' \rangle,
	\end{equation}
	for $d_1, {d_1}' \in D_1$ and $d_2, {d_2}' \in D_2.$

	 Let $K_\phi$ denote the Hilbert $\mathcal{B}$-module associated with $D_1 \otimes_\tau D_2.$ That is, 
     $$  K_\phi = \overline{(D_1 \otimes_\tau D_2)/N_{\langle .,.\rangle^\prime}}^{\langle .,.\rangle^\prime}, $$ 
     where $$N_{\langle .,.\rangle^\prime} = \{d_1 \otimes d_2 \in D_1 \otimes D_2: \langle d_1 \otimes d_2, d_1 \otimes d_2 \rangle' = 0\}.$$
		
		
		Put 
$D := D_2.$ 
		
		For each $d \in D_1$, define $M_d:D \rightarrow K_{\phi}$ by 
		\begin{equation*}
			M_d(d_0) = d \otimes d_0,\; \text{for all}\; d_0 \in D.
		\end{equation*}
		Clearly, $M_d$ is $\mathcal{B}$-linear, in fact $M_d \in \mathcal{L}_\mathcal{B}(D,K_\phi).$ Indeed, for  $q \in S(\mathcal{B}),$ we have
		\begin{equation}
			\begin{split}
				\|M_d(d_0)\|_q^2 
				&= q(\langle d \otimes d_0, d \otimes d_0 \rangle') \\ 
				&= q(\langle d_0, \tau(\langle d,d \rangle) d_0 \rangle) \\
				& \leq K q(\langle d_0, d_0 \rangle)   \quad \quad (\text{by  Remark }\ref{K}).
			\end{split}
		\end{equation}
		Also, for $d_1 \in D_1,\text{ and } d_2, d_0 \in D,$ we have
		\begin{equation*}
			\begin{split}
				\langle  d_1 \otimes d_2 , M_d (d_0) \rangle' &= \langle d_1 \otimes d_2 , d \otimes d_0 \rangle'\\
				&= \langle  d_2, \tau(\langle d_1 , d \rangle) d_0 \rangle \\
				&= \langle  \tau(\langle d , d_1 \rangle) d_2,  d_0 \rangle \\
					&= \langle  \langle d , d_1 \rangle d_2,  d_0 \rangle \qquad \ \ (\text{by equation } (\ref{ipdef})) \\
				&= \langle {M_d}^*(d_1 \otimes d_2), d_0  \rangle.
			\end{split}
		\end{equation*}
		Hence, $M_d \in \mathcal{L_B}(D, K_\phi)$ with ${M_d}^*(d_1 \otimes d_2)=\langle d , d_1 \rangle d_2$ for all $d_1 \in D_1$ and $d_2\in D$.

		Now, we can define a map $\eta: D \rightarrow \mathcal{L}_\mathcal{B}(D, K_\phi)$ by $d \mapsto \eta(d) := M_d.$
		Take $V_\phi := \eta(\xi \otimes 1_\mathcal{B})$.  Then
		the adjoint of $V_\phi,$ is given by 
		$$ V_\phi^*(d_1 \otimes d_2) = \eta(\xi \otimes 1_\mathcal{B})^*(d_1 \otimes d_2) = {M_{\xi \otimes 1_\mathcal{B}}}^*(d_1 \otimes d_2) = \langle \xi \otimes 1_\mathcal{B}, d_1\rangle d_2,$$
		for all $d_1 \in D_1$ and $d_2 \in D.$

			Let $T \in \mathcal{L}_\mathcal{B}(D_1),$ define $\rho^\prime(T)$ on $D_1 \otimes_\tau D_2$ by 
		$$\rho^\prime(T)(d_1 \otimes d_2) := Td_1 \otimes d_2,$$ for all $d_1 \in D_1, d_2 \in D.$
		Observe that $\rho^\prime(T)(d_1 \otimes d_2) \in N_{\langle \cdot,\cdot \rangle'}$ whenever $d_1 \otimes d_2 \in N_{\langle \cdot,\cdot \rangle'}.$ Indeed, if $\langle d_1 \otimes d_2, d_1 \otimes d_2 \rangle' = 0$, then $\langle d_2,\tau(\langle d_1,d_1\rangle)d_2 \rangle = 0. $ Observe that, for any $p \in S(\mathcal{A})$, we have
		\begin{equation*}
			\begin{split}
				\langle \rho^\prime(T)(d_1 \otimes d_2), \rho^\prime(T)(d_1 \otimes d_2) \rangle' 
				&= \langle d_2,\tau(\langle Td_1,Td_1\rangle)d_2 \rangle  \\
			&= \langle d_2,\tau(\langle d_1,T^*Td_1\rangle)d_2 \rangle  \\
				&\leq K \langle  d_2,\tau(\langle d_1,d_1\rangle)d_2 \rangle \quad  \quad (\text{by Remark }\ref{K}) \\
				&=0.
			\end{split}
		\end{equation*}
		Therefore, this map induces a map $\tilde{\rho}(T)$ on $(D_1 \otimes_\tau D_2)/N_{\langle \cdot, \cdot \rangle}.$ Next, we observe that $\tilde{\rho}(T)$ is bounded using the above inequalities.	For the sake of completeness, for $d_1 \in D_1, d_2 \in D$ and $q \in S(\mathcal{B}),$ we have
		\begin{equation*}
			\begin{split}
				\|\tilde{\rho}(T)(d_1 \otimes d_2)\|_q^2 &=	q(\langle \tilde{\rho}(T)(d_1 \otimes d_2), \tilde{\rho}(T)(d_1 \otimes d_2) \rangle') \\
				&= q(\langle d_2, \tau(\langle Td_1,Td_1\rangle)d_2 \rangle)\\
				&\leq K\|d_1 \otimes d_2\|_q^2.
			\end{split}
		\end{equation*}	
		Hence $\tilde{\rho}(T)$ can be extended to a bounded linear operator $\rho_\phi(T)$ on $K_\phi$. 
		
		Note that, for $d_1, {d_1}' \in D_1, d_2, {d_2}' \in D$, we have
		\begin{align*}
			\langle d_1 \otimes d_2, \rho_\phi(T)({d_1}' \otimes {d_2}') \rangle' &=	\langle d_1 \otimes d_2, T{d_1}' \otimes {d_2}' \rangle' \\
			&= \langle d_2, \tau(\langle d_1, T{d_1}' \rangle) {d_2}' \rangle \\  
			&= \langle d_2, \langle T^* d_1, {d_1}' \rangle {d_2}' \rangle \\   
			&=	\langle \rho_\phi(T^*)(d_1 \otimes d_2), {d_1}' \otimes {d_2}' \rangle'.     	
	                               \end{align*}
		Hence, $\rho_\phi(T)^* = \rho_\phi(T^*)$.
		Therefore $\rho_\phi: \mathcal{L}_\mathcal{B}(D_1) \rightarrow \mathcal{L_B}(K_\phi)$	defined by 
		\begin{equation*}
			T \rightarrow \rho_\phi(T)
		\end{equation*}
		is a continuous $^*$-morphism and hence is a representation of $\mathcal{L}_\mathcal{B}(D_1)$ on $K_\phi$.

		Next, for $S \in \mathcal{L}_\mathcal{B}(X)$, define 
		$$ \tilde{\gamma_\phi}(S)(\xi \otimes b) = S\xi \otimes b, $$
		for all $b \in \mathcal{B}$. Clearly $\tilde{\gamma_\phi}(S)$ is $\mathcal{B}$-linear. Moreover, the map is well-defined as $\langle \xi \otimes b, \xi \otimes b \rangle = 0$ implies $\langle \tilde{\gamma_\phi}(S)(\xi \otimes b), \tilde{\gamma_\phi}(S)(\xi \otimes b) \rangle = 0$, since 
		\begin{equation*}
			\begin{split}
				\langle \tilde{\gamma_\phi}(S)(\xi \otimes b), \tilde{\gamma_\phi}(S)(\xi \otimes b) \rangle &= \langle S\xi \otimes b, S\xi \otimes b \rangle \\
				&= b^*\langle S\xi, S\xi \rangle b \\
				&\leq K b^* \langle \xi, \xi \rangle b \quad \quad (\text{by Remark }\ref{K})
				\\
				&\leq K \langle \xi \otimes b, \xi \otimes b \rangle \\
				&= 0.
			\end{split}
		\end{equation*}
		By the above calculations, we can in fact observe that, for each $S \in \mathcal{L}_\mathcal{B}(X),$ the map $\tilde{\gamma_\phi}(S)$ is continuous. Moreover, $\tilde{\gamma_\phi}(S)^* =\tilde{\gamma_\phi}(S^*).$ Thus, we can continuously extend $\tilde{\gamma_\phi}(S)$ to $\mathcal{L}_\mathcal{B}(D_1),$ say $\gamma_\phi(S) \in \mathcal{L}_\mathcal{B}(D_1)$, for each $S \in \mathcal{L}_\mathcal{B}(X).$

		Define $\pi_\phi:=\rho_\phi\circ\gamma_\phi\circ\tilde{\pi}_\phi.$ Since $\rho_\phi, \gamma_\phi$ and $\tilde{\pi}_\phi$ are continuous $*$-morphisms, their composition $\pi_\phi$ is also a continuous $*$-morphism. The map can be well understood with the help of the following diagram:

		\tikzstyle{line} = [draw, -latex'] 
		
		\begin{figure}[hbt!]
			\centering
			\begin{tikzpicture}[scale=0.60, arr/.style = {-Stealth}]
				\node (3) at (8,0) {$\mathcal{A}$};
				\node (4) at (8,3.5) {$\mathcal{L_B}(X)$};
				\node (7) at (12,0) {$\mathcal{L_B}(K_\phi)$};
				\node (0) at (12,3.5) {$\mathcal{L_B}(D_1)$};
				\footnotesize
				\draw[arr] (0) to 
				["$\rho_\phi$"] (7);
				\draw[arr]   (3) to ["$\tilde{\pi}_\phi$"]    (4);
				\draw[arr]   (4) to ["$\gamma_\phi$"]    (0);
				\draw[arr]   (3) to ["$\pi_\phi$"]    (7);

			\end{tikzpicture}
			
		\end{figure}

		For $a \in \mathcal{A} \text{ and }d \in D,$ we have 
		\begin{equation*}
			\begin{split}
				{V_\phi}^*\pi_\phi(a)V_\phi(d) 
				&= {\eta(\xi \otimes 1_\mathcal{B})}^*\rho_\phi \circ\gamma_\phi\circ\tilde{\pi}_\phi(a)\eta(\xi \otimes1_\mathcal{B})(d) \\
				&={\eta(\xi \otimes 1_\mathcal{B})}^*\gamma_\phi\circ\tilde{\pi}_\phi(a)(\xi \otimes 1_\mathcal{B}) \otimes d \\
				&={\eta(\xi \otimes 1_\mathcal{B})}^*\tilde{\pi}_\phi(a)\xi \otimes 1_\mathcal{B} \otimes d \\
				&= \langle \xi \otimes 1_\mathcal{B},\tilde{\pi}_\phi(a)\xi \otimes 1_\mathcal{B}\rangle d\\
					&= \langle \xi,\tilde{\pi}_\phi(a)\xi \rangle d\\
				&= \phi(a)d.
			\end{split}
		\end{equation*}	  
		
		Lastly, we show the minimality of this representation. To do this, first we show that $xb\otimes b'=x\otimes bb'$, whenever $x \in X$ and $b,b' \in \mathcal{B}$.  So, let  $w = (x b)\otimes b' - x \otimes (b b')$, where $x \in X$ and $b,b' \in \mathcal{B}$. 
		Then
		\[
		\begin{aligned}
			\langle (x b)\otimes b', (x b)\otimes b' \rangle 
			&= (b')^* \,\langle x b, x b \rangle \, b' 
			= (b')^* \, b^* \langle x, x \rangle b \, b',\\[6pt]
			\langle x \otimes (b b'), x \otimes (b b') \rangle 
			&= (b b')^* \langle x, x \rangle (b b') 
			= (b')^* b^* \langle x, x \rangle b\, b',\\[6pt]
			\langle (x b) \otimes b',\, x \otimes (b b') \rangle 
			&= (b')^* \langle x b, x \rangle (b b') 
			= (b')^* b^* \langle x, x \rangle b\, b',\\[6pt]
			\langle x \otimes (b b'),\, (x b) \otimes b' \rangle 
			&= (b b')^* \langle x, x b \rangle b'
			= (b')^* b^* \langle x, x \rangle b\, b'.
		\end{aligned}
		\]
		
		Hence 
		\begin{align*}
			\langle w, w \rangle 
			&= (b')^* b^* \langle x, x \rangle b\, b' 
			+ (b')^* b^* \langle x, x \rangle b\, b' 
			- (b')^* b^* \langle x, x \rangle b\, b' 
			- (b')^* b^* \langle x, x \rangle b\, b' \\
			&= 0.
		\end{align*}
		Therefore  
$(x b)\otimes b' = x \otimes (b b')$ in the quotient module.
		Put $x= \tilde{\pi}_\phi(a)\xi$ in the above calculations to obtain $( \tilde{\pi}_\phi(a)\xi b)\otimes b' =  \tilde{\pi}_\phi(a)\xi \otimes (b b')$.

Next, let us examine how elements of the algebra $\mathcal{B}$ interact within the tensor product and the inner product. For $d_1, {d_1}' \in D_1$ and $d_2 = \xi \otimes b_2, {d_2}'=\xi \otimes {b_2}' \in D$ and $b \in \mathcal{B}$, we have
\begin{align*}
    \langle d_1 \otimes \xi \otimes b_2, {d_1}' \otimes \xi \otimes {b_2}'b  \rangle' 
    &= \langle \xi \otimes b_2, \tau(\langle d_1, {d_1}' \rangle) (\xi \otimes {b_2}'b)  \rangle \\
    &= \langle \xi \otimes b_2,  \xi \otimes \langle d_1, {d_1}' \rangle{b_2}'b \rangle \\
    &= \langle \xi \otimes b_2,  \xi \otimes \langle d_1, {d_1}'{b_2}' \rangle b \rangle \\
    &=  \langle d_1 \otimes \xi \otimes b_2, {d_1}'{b_2}' \otimes \xi \otimes b  \rangle'.
\end{align*}
The equality ${d_1}' \otimes \xi \otimes {b_2}'b = {d_1}'{b_2}' \otimes \xi \otimes b $ allows the factor ${b_2}'$  to be absorbed into the first tensor component, which is central to describing the full $\mathcal{B}$-span.

		Finally, combining this with the fact that $[\tilde{\pi}_\phi(\mathcal{A})\xi]_\mathcal{B} = X$, we obtain
		$$[\pi_\phi(\mathcal{A})V_\phi (D)]_\mathcal{B} = [\tilde{\pi}_\phi(\mathcal{A})\xi \otimes 1_\mathcal{B} \otimes D]_\mathcal{B}= K_\phi.$$  

	\end{proof}

	\begin{remark}
		\begin{enumerate}
		    \item In every instance, $[\cdot]_\mathcal{B}$ represents the closed linear span over the algebra $\mathcal{B}$ taken on the right.
             \item In Theorem \ref{prorep1}, the subscript $\otimes_\tau$ indicates that the inner product is defined via $\tau$. The same approach applies to other maps, and once the structure is fixed, we drop the subscript and write simply $\otimes$.
		\end{enumerate}

	\end{remark}
	
	We note the associative nature of tensor products with respect to different inner products, which will be useful in the proofs that follow and is of independent interest.

	\begin{remark}\label{asso}
		Let $E$ be Hilbert module over a  pro-$C^*$-algebra $\mathcal{A}$. Suppose $D, \tilde{\pi}_\phi, \rho_\phi, \gamma_\phi, \pi_\phi$  are as in Theorem \ref{prorep1}. Then $(E \otimes_{\gamma_\phi \circ \tilde{\pi}_\phi} D_1) \otimes_\tau D = E \otimes_{\pi_\phi} (D_1 \otimes_\tau D). $ 
		
		Indeed, for $z_1,z_2 \in E,$ $x_1,x_2 \in D_1 \text{ and } y_1,y_2 \in D,$ we have
		\begin{align*}
			\left\langle (z_1 \otimes x_1) \otimes y_1, (z_2 \otimes x_2) \otimes y_2\right\rangle &= \left\langle y_1,\langle z_1 \otimes x_1 ,z_2 \otimes x_2 \rangle y_2\right\rangle \\
			&=\left\langle y_1,\left\langle x_1 , \gamma_\phi \circ \tilde{\pi}_\phi(\langle z_1,z_2 \rangle ) x_2 \right\rangle y_2\right\rangle \\
			&=\left\langle x_1 \otimes y_1, \left( \gamma_\phi \circ \tilde{\pi}_\phi(\langle z_1,z_2 \rangle ) x_2 \right) \otimes y_2 \right \rangle' \\
			&=\left\langle x_1 \otimes y_1,  \rho_\phi\circ\gamma_\phi \circ \tilde{\pi}_\phi(\langle z_1,z_2 \rangle )( x_2 \otimes y_2)\right \rangle' \\
			&=\left\langle x_1 \otimes y_1,  \pi_{\phi}(\langle z_1,z_2 \rangle )( x_2 \otimes y_2)\right \rangle' \\
			&= \left\langle z_1 \otimes (x_1 \otimes y_1), z_2 \otimes (x_2 \otimes y_2)\right\rangle.
		\end{align*}
	\end{remark}

 	The following lemma 
	a completely positive map in terms of an isometry. The inspiration is drawn from \cite[Lemma 1]{KS}.
	
	\begin{lemma}\label{proiso}
		Let $\mathcal{A}$ and $\mathcal{B}$ be pro-$C^*$-algebras. Let $E$ be Hilbert $\mathcal{A}$-module and $F$ be a Hilbert $\mathcal{B}$-module. Let $\phi:\mathcal{A} \rightarrow \mathcal{B}$ be a continuous, unital, completely positive map and $\Phi: E \rightarrow F$ be a $\phi$-map. Suppose $D, K_\phi, \xi, \tilde{\pi}_\phi, \pi_\phi \text{ and } \gamma_\phi $ are same as in Theorem \ref{prorep1} and $E \otimes_{\pi_\phi} K_\phi$ is associated with $(E \otimes_{\gamma_\phi\circ\tilde{\pi}_\phi} D_1) \otimes_\tau D $. Then there exists an isometry $v: E \otimes_{\pi_\phi} K_\phi \rightarrow [\Phi(E)]_\mathcal{B}$ such that 
		\begin{equation*}
			v\left(z \otimes (\xi \otimes 1_\mathcal{B}) \otimes (\xi \otimes 1_\mathcal{B})\right) = \Phi(z),
		\end{equation*}
	for all $z \in E$.
	\end{lemma}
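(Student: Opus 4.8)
The goal is to construct an isometry $v$ from the Hilbert module $E \otimes_{\pi_\phi} K_\phi$ (which by the associativity identity in Remark~\ref{asso} equals $(E \otimes_{\gamma_\phi\circ\tilde{\pi}_\phi} D) \otimes_i D$) into $F$, sending the distinguished elementary tensor $z \otimes (1_\mathcal{B}\otimes\xi) \otimes (1_\mathcal{B}\otimes\xi)$ to $\Phi(z)$. The natural strategy is to define $v$ first on elementary tensors, verify it is inner-product preserving (hence well-defined modulo the null space $N_\phi$ and isometric), and then extend by continuity and linearity to all of $E \otimes_{\pi_\phi} K_\phi$. Because $\Phi$ is a $\phi$-map, the inner product on $F$ of $\Phi(z_1)$ and $\Phi(z_2)$ is exactly $\phi(\langle z_1,z_2\rangle)$, and this is precisely the kind of quantity the tensor-product inner product produces, so I expect the whole construction to hinge on matching these two expressions.

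\textbf{Key steps.}
First I would define $v$ on the dense subspace spanned by elementary tensors of the form $z \otimes (1_\mathcal{B}\otimes\xi) \otimes (1_\mathcal{B}\otimes\xi)$ by the prescribed formula $v(z\otimes(1_\mathcal{B}\otimes\xi)\otimes(1_\mathcal{B}\otimes\xi)) = \Phi(z)$, extended $\mathbb{C}$-linearly; more generally I would need to decide how $v$ acts on $z \otimes d_1 \otimes d_2$ for arbitrary $d_1,d_2\in D$, and the $\mathcal{B}$-module structure together with the density statement $\overline{\operatorname{span}}\{\pi_\phi(\mathcal{A})V_\phi D\}=K_\phi$ from Theorem~\ref{prorep1} should pin this down on a dense set. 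Second, and this is the heart of the matter, I would compute the $F$-valued inner product $\langle v(z_1\otimes(1_\mathcal{B}\otimes\xi)\otimes(1_\mathcal{B}\otimes\xi)), v(z_2\otimes(1_\mathcal{B}\otimes\xi)\otimes(1_\mathcal{B}\otimes\xi))\rangle = \langle \Phi(z_1),\Phi(z_2)\rangle = \phi(\langle z_1,z_2\rangle)$, and compare it with the tensor-product inner product $\langle z_1\otimes(1_\mathcal{B}\otimes\xi)\otimes(1_\mathcal{B}\otimes\xi),\, z_2\otimes(1_\mathcal{B}\otimes\xi)\otimes(1_\mathcal{B}\otimes\xi)\rangle$ computed in $E \otimes_{\pi_\phi} K_\phi$. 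Unwinding the latter through Remark~\ref{asso} and the definitions of $\pi_\phi$, $\tilde{\pi}_\phi$, and the inner products on $D$ and $K_\phi$ should reduce it to $\langle \xi,\tilde\pi_\phi(\langle z_1,z_2\rangle)\xi\rangle = \phi(\langle z_1,z_2\rangle)$, using the GNS relation $\phi(a)=\langle\xi,\tilde\pi_\phi(a)\xi\rangle$ from Theorem~\ref{Paschpro}. Matching these two computations shows $v$ preserves the inner product on the distinguished tensors.

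\textbf{Well-definedness and extension.}
Inner-product preservation immediately gives that $v$ is well-defined (it kills the null vectors, since a null vector on the domain maps to a vector of zero length in $F$) and isometric on its domain. I would then extend $v$ by continuity to the closure $E \otimes_{\pi_\phi} K_\phi$; because the chosen vectors $z\otimes(1_\mathcal{B}\otimes\xi)\otimes(1_\mathcal{B}\otimes\xi)$ together with the $\mathcal{B}$-action generate a dense subset — this is exactly where the density conclusion $\overline{\operatorname{span}}\{\pi_\phi(\mathcal{A})V_\phi D\}=K_\phi$ of Theorem~\ref{prorep1} is needed — the isometry is defined on all of the module, and isometries extend uniquely and continuously.

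\textbf{Main obstacle.}
The step I expect to be most delicate is the careful bookkeeping in the central inner-product computation: one must correctly push the scalar $1_\mathcal{B}$ factors and the module actions through three nested tensor legs over two different bridging maps ($\gamma_\phi\circ\tilde\pi_\phi$ and the identity $i$), invoking Remark~\ref{asso} to reassociate, and then collapse everything down to $\phi(\langle z_1,z_2\rangle)$ without dropping a $b^*b$ term or mis-tracking which leg carries the $\mathcal{B}$-valued coefficient. Ensuring that the map respects the passage to the quotients by the various null spaces $N_\phi$ at each tensor stage, rather than only on representatives, is the second place where care is required, but it follows formally once the inner-product identity is in hand.
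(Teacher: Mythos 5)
Your overall shape (define on elementary tensors, verify inner-product preservation, extend by density) matches the paper, but there is a genuine gap at the exact point you flag as needing to be ``pinned down'': the distinguished tensors $z \otimes (1_\mathcal{B}\otimes\xi) \otimes (1_\mathcal{B}\otimes\xi)$, even together with the right $\mathcal{B}$-module action, do \emph{not} span a dense subspace of $E \otimes_{\pi_\phi} K_\phi$, and the density statement $\overline{\mathrm{span}}\{\pi_\phi(\mathcal{A})V_\phi D\} = K_\phi$ does not rescue this. The right $\mathcal{B}$-action only moves the last tensor leg within $(1_\mathcal{B}\otimes\xi)\mathcal{B}$, and the $E$-balancing relation $za \otimes d \otimes d' = z \otimes \gamma_\phi\circ\tilde{\pi}_\phi(a)d \otimes d'$ only moves the \emph{middle} leg; no available operation produces $1_\mathcal{B}\otimes\tilde{\pi}_\phi(a_2)(\xi b_2)$ in the last leg for general $a_2 \in \mathcal{A}$. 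Concretely, take $\mathcal{B} = \mathbb{C}$ and $\phi$ a state whose GNS space $H_\phi$ has dimension greater than one: then the closed span of everything reachable from your distinguished vectors is $E \otimes H_\phi \otimes \mathbb{C}\xi$, a proper closed submodule of $E \otimes H_\phi \otimes H_\phi$. So ``isometric on the distinguished tensors, then extend by continuity'' never defines $v$ on the whole module, and the inner-product identity $\langle\Phi(z),\Phi(w)\rangle = \phi(\langle z,w\rangle)$ that you check there is essentially just the definition of a $\phi$-map, not the substance of the lemma.

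The missing idea, which is the heart of the paper's proof, is an explicit \emph{ansatz} for $v$ on a genuinely dense set of elementary tensors: the paper defines
\begin{equation*}
v_0\bigl(z \otimes (1_\mathcal{B} \otimes \tilde{\pi}_\phi(a_1)(\xi b_1)) \otimes (1_\mathcal{B} \otimes \tilde{\pi}_\phi(a_2)(\xi b_2))\bigr) = \phi(a_2)b_2\,\Phi(za_1)\,b_1,
\end{equation*}
where density of this set follows from the density of $\chi_\phi = \mathrm{span}\{\tilde{\pi}_\phi(a)(\xi b)\}$ in $X$ (Theorem \ref{Paschpro}), not from Theorem \ref{prorep1}. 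This formula cannot be derived from $\mathcal{B}$-linearity plus the values $\Phi(z)$; it has to be guessed, and it crucially uses the \emph{left} $\mathcal{B}$-action on $F$ --- i.e.\ the hypothesis that $F$ is a Hilbert $\mathcal{BB}$-module, which your proposal never invokes. The long computation in the paper (reassociating via Remark \ref{asso} and repeatedly applying the GNS relation) then shows that $v_0$ preserves inner products on this full dense set, which is what makes the extension to an isometry on all of $E \otimes_{\pi_\phi} K_\phi$ legitimate; your proposed computation is only the special case $a_1 = a_2 = 1_\mathcal{A}$, $b_1 = b_2 = 1_\mathcal{B}$ of it.
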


	\begin{proof}
		Let $\chi_\phi$ denote the set span$\{\tilde{\pi}_\phi(a)(\xi b): a \in \mathcal{A}, b \in \mathcal{B}\}.$		
		Observe that $E \otimes_{\gamma_\phi\circ\tilde{\pi}_\phi} \left(D_1 \otimes_\tau D\right )$ is a right $\mathcal{B}$-module. For $e \in E, k \in K_\phi$, the right action of $\mathcal{B}$ on $E \otimes_{\pi_\phi}K_\phi$ is given by $(e \otimes k)b = e \otimes kb$, for all $b \in \mathcal{B}.$
		
		For any $e_1,e_2 \in E , \text{ and } k_1,k_2 \in \left(D_1 \otimes_\tau D \right),$ the inner-product on $\left(E \otimes_{{\pi}_\phi} \left(D_1 \otimes_\tau D\right)\right)$ is defined by 
		\begin{equation*}
			\left\langle e_1 \otimes k_1 , e_2 \otimes k_2 \right\rangle := \left\langle k_1, {\pi}_\phi(\langle e_1, e_2 \rangle)k_2 \right\rangle'.
		\end{equation*}

		Define $v_0: \left(E \otimes_{\pi_\phi}K_\phi\right) \rightarrow [\Phi(E)]_\mathcal{B}$ by 
		\begin{equation*}
			v_0\left( z \otimes \left(\tilde{\pi}_\phi(a)(\xi) \otimes b_0 \right) \otimes (\xi \otimes b_1) \right) = \Phi(za)b_0 b_1, 
		\end{equation*}
		for all $a \in \mathcal{A}, b_0, b_1 \in \mathcal{B}$ and $z \in E.$

       For \(a, c \in \mathcal{A}\), \(b_0,b_1,d_0,d_1 \in \mathcal{B}\) and \(z,w\in E\), observe
\begin{align*}
&\Big\langle z\otimes\big(\tilde{\pi}_\phi(a)(\xi)\otimes b_0\big)\otimes(\xi\otimes b_1),\;
w\otimes\big(\tilde{\pi}_\phi(c)(\xi )\otimes d_0\big)\otimes(\xi\otimes d_1)\Big\rangle \\[6pt]
&\;=\;\Big\langle \big(\tilde{\pi}_\phi(a)(\xi)\otimes b_0\big)\otimes(\xi\otimes b_1),\;
\rho_\phi\circ\gamma_\phi\circ\tilde{\pi}_\phi(\langle z,w\rangle)\big(\tilde{\pi}_\phi(c)(\xi)\otimes d_0\big)\otimes(\xi\otimes d_1)\Big\rangle' \\[6pt]
&\;=\;\Big\langle \big(\tilde{\pi}_\phi(a)(\xi)\otimes b_0\big)\otimes(\xi\otimes b_1),\;
\big(\tilde{\pi}_\phi(\langle z,w\rangle)\tilde{\pi}_\phi(c)(\xi)\otimes d_0\big)\otimes(\xi\otimes d_1)\Big\rangle' \\[6pt]
&\;=\;\Big\langle \xi\otimes b_1,\;
\big\langle \tilde{\pi}_\phi(a)(\xi)\otimes b_0, \tilde{\pi}_\phi(\langle z,wc\rangle)(\xi)\otimes d_0 \big\rangle (\xi\otimes d_1)\Big\rangle \\[6pt]
&\;=\;\Big\langle \xi\otimes b_1,\;
{ b_0}^*\big\langle \tilde{\pi}_\phi(a)(\xi), \tilde{\pi}_\phi(\langle z,wc\rangle)(\xi) \big\rangle d_0  (\xi\otimes d_1)\Big\rangle \\[6pt]
&\;=\;\Big\langle \xi\otimes b_1,\;
 \xi\otimes { b_0}^*\big\langle \xi, \tilde{\pi}_\phi(\langle za,wc\rangle)(\xi) \big\rangle d_0 d_1\Big\rangle \\[6pt]
 &\;=\; {b_1}^*\big\langle \xi,\;
 \xi \big\rangle { b_0}^*\phi(\langle za,wc\rangle)d_0 d_1 \\[6pt]
 &\;=\; {b_1}^* { b_0}^*\big \langle \Phi( za), \Phi(wc) \big \rangle d_0 d_1 \\[6pt]
  &\;=\; \big \langle \Phi( za)b_0 b_1, \Phi(wc)d_0 d_1 \big \rangle.
\end{align*}

		Since 
		$ \text{span}\left\{\left(\tilde{\pi}_\phi(a)(\xi) \otimes b_1 \right) \otimes (\xi \otimes b_2)  \ : \ a \in \mathcal{A}, b_1,b_2 \in \mathcal{B} \right\}$
		is dense in $K_\phi$, the map $v_0$ can be uniquely extended to  an isometry $v: E \otimes_{\pi_\phi} K_\phi \rightarrow [\Phi(E)]_\mathcal{B}.$ In particular,	$v\left(z \otimes (\xi \otimes 1_\mathcal{B}) \otimes (\xi \otimes 1_\mathcal{B}) \right) = \Phi(z)$.

        We can observe that $v$ is adjointable. Indeed, the adjoint
		of $v$ is given on the dense set of generators by
		\[
		v^*(\Phi(za)b) = z \otimes (\tilde{\pi}_\phi(a)\xi \otimes 1_\mathcal{B}) \otimes (\xi \otimes b),
		\qquad z \in E,\; b \in \mathcal{B}.
		\]
	\end{proof}
	
	
	The following theorem is a modified version of \cite[Theorem 4]{KS} applied to completely positive maps between Hilbert modules over pro-$C^*$-algebras.

			

\begin{theorem}\label{prorep2}
		Let $\mathcal{A}, \mathcal{B}$ be  pro-$C^*$-algebras and $\phi: \mathcal{A} \rightarrow \mathcal{B}$ be a continuous, unital, completely positive map. Let $E$ be a Hilbert $\mathcal{A}$-module, $F$ be a Hilbert $\mathcal{B}$-module and $\Phi: E \rightarrow F$ be a $\phi$-map. Then there exist Hilbert $\mathcal{B}$-modules $D$ and $X,$ with an element $\xi \in X$, and triples $\left(\pi_\phi,V_\phi,K_\phi\right)$ and $\left(\pi_\Phi,W_\Phi,K_\Phi\right)$ such that 
		\begin{enumerate}
			
			\item $K_\phi$ and $K_\Phi$ are  Hilbert 
            $\mathcal{B}$-modules;
			\item$\pi_\phi: \mathcal{A} \rightarrow \mathcal{L}_\mathcal{B}(K_\phi)$ is a unital representation of $\mathcal{A}$;
			\item $\pi_\Phi: E \rightarrow \mathcal{L}_\mathcal{B}(K_\phi,K_\Phi)$ is a $\pi_\phi$-morphism;
			\item $V_\phi: D \rightarrow K_\phi$ and $W_\Phi: [\Phi(E)]_\mathcal{B} \rightarrow K_\Phi$ are bounded linear operators such that 
			$$\phi(a)I_D = {V_\phi}^*\pi_\phi(a)V_\phi$$ for all $a \in \mathcal{A}$, and
			$$\Phi(z) = {W_\Phi}^*\pi_\Phi(z)V_\phi (\xi \otimes 1_\mathcal{B})$$ for all $z \in E.$
		\end{enumerate}
	\end{theorem}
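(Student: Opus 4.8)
The plan is to assemble the four required pieces from the machinery already in place, letting Theorem \ref{prorep1} supply all data attached to $\phi$ and letting the induced-representation construction of Lemma \ref{construction} supply the data attached to $\Phi$. First I would apply Theorem \ref{prorep1} to the continuous completely positive map $\phi$. This immediately produces the Hilbert $\mathcal{B}$-modules $X$ (with its distinguished vector $\xi$), $D$, and $K_\phi$, the unital continuous representation $\pi_\phi:\mathcal{A}\to\mathcal{L}_\mathcal{B}(K_\phi)$, and the operator $V_\phi:D\to K_\phi$ satisfying $\phi(a)I_D={V_\phi}^*\pi_\phi(a)V_\phi$ for all $a\in\mathcal{A}$, together with the density $\overline{span}\{\pi_\phi(\mathcal{A})V_\phi D\}=K_\phi$. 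This already settles item (1) for $K_\phi$, item (2), and the first factorization identity in item (4), so the only genuinely new objects to construct are $K_\Phi$, $\pi_\Phi$, and $W_\Phi$.

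The key observation for the second step is that $\pi_\phi:\mathcal{A}\to\mathcal{L}_\mathcal{B}(K_\phi)$ is itself a continuous $^*$-morphism, so Lemma \ref{construction} applies with $\pi_\phi$ in the role of the connecting morphism and with $K_\phi$ regarded as the Hilbert $\mathcal{B}$-module carrying the left $\mathcal{A}$-action through $\pi_\phi$. I would therefore form $E\otimes_{\pi_\phi}K_\phi$ with the sesquilinear form $\langle z_1\otimes k_1, z_2\otimes k_2\rangle=\langle k_1,\pi_\phi(\langle z_1,z_2\rangle)k_2\rangle$, define $K_\Phi$ to be the associated Hilbert $\mathcal{B}$-module, and set $\pi_\Phi:=\eta_{\pi_\phi}:E\to\mathcal{L}_\mathcal{B}(K_\phi,K_\Phi)$ via $\pi_\Phi(z)=M_z$ with $M_z(k)=z\otimes k$. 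Lemma \ref{construction} then guarantees both $\langle\pi_\Phi(x),\pi_\Phi(y)\rangle=\pi_\phi(\langle x,y\rangle)$ and $\pi_\Phi(za)=\pi_\Phi(z)\pi_\phi(a)$, that is, $\pi_\Phi$ is a $\pi_\phi$-morphism; this yields item (1) for $K_\Phi$ and item (3).

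For the last step I would invoke the identification of Remark \ref{asso}, which equates $E\otimes_{\pi_\phi}K_\phi$ with $(E\otimes_{\gamma_\phi\circ\tilde{\pi}_\phi}D)\otimes_i D$, precisely the hypothesis under which Lemma \ref{proiso} furnishes an isometry $v:E\otimes_{\pi_\phi}K_\phi\to F$ with $v\big(z\otimes(1_\mathcal{B}\otimes\xi)\otimes(1_\mathcal{B}\otimes\xi)\big)=\Phi(z)$. Setting $W_\Phi:=v^*:F\to K_\Phi$ gives a bounded operator with $W_\Phi^*=v$. Recalling from Theorem \ref{prorep1} that $V_\phi(1_\mathcal{B}\otimes\xi)=(1_\mathcal{B}\otimes\xi)\otimes(1_\mathcal{B}\otimes\xi)$ and that $\pi_\Phi(z)$ sends this element to $z\otimes(1_\mathcal{B}\otimes\xi)\otimes(1_\mathcal{B}\otimes\xi)$, a direct trace through the tensor factors gives
\[
{W_\Phi}^*\pi_\Phi(z)V_\phi(1_\mathcal{B}\otimes\xi)=v\big(z\otimes(1_\mathcal{B}\otimes\xi)\otimes(1_\mathcal{B}\otimes\xi)\big)=\Phi(z),
\]
which is the remaining identity in item (4).

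I expect the main obstacle to lie in the bookkeeping of the middle step rather than in any deep estimate: one must check carefully that Lemma \ref{construction} legitimately applies with $\pi_\phi$ substituted for the connecting morphism, so that the form on $E\otimes_{\pi_\phi}K_\phi$ is positive (using that $\pi_\phi$ is a $^*$-morphism and that each Gram matrix $[\langle e_i,e_j\rangle]$ is positive), and then that the resulting module matches the iterated tensor product of Remark \ref{asso} well enough for Lemma \ref{proiso} to apply and for $W_\Phi=v^*$ to land in $K_\Phi$. Once these identifications are secured, the final factorization computation is routine.
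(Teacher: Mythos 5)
Your proposal is correct and follows essentially the same route as the paper's proof: invoke Theorem \ref{prorep1} for the triple $(\pi_\phi, V_\phi, K_\phi)$, apply Lemma \ref{construction} to the continuous $^*$-morphism $\pi_\phi$ to obtain $K_\Phi$ and the $\pi_\phi$-morphism $\pi_\Phi = \eta_{\pi_\phi}$, and then use Remark \ref{asso} together with the isometry $v$ of Lemma \ref{proiso} to define $W_\Phi$ (the paper takes $W_\Phi := W^*$ where $W$ extends $v$ across the identification, which is the same operator as your $v^*$) and verify the factorization. The only cosmetic difference is that the paper first establishes the more general identity ${W_\Phi}^*\pi_\Phi(z)V_\phi(1_\mathcal{B}\otimes\tilde{\pi}_\phi(a)(\xi b)) = \phi(a)b\,\Phi(z)$ and then specializes $a = 1_\mathcal{A}$, $b = 1_\mathcal{B}$, whereas you evaluate directly at $1_\mathcal{B}\otimes\xi$.
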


	\begin{proof}
		Let $\tilde{\pi}_\phi$ be the Paschke's GNS representation associated with $\phi$ as specified in Theorem \ref{Paschpro}, that is, we get a Hilbert $\mathcal{B}$-module $X$, an element $\xi \in X$ and a unital continuous representation $\tilde{\pi}_\phi : \mathcal{A} \rightarrow \mathcal{L_B}(X)$ such that $\phi(a) = \langle\xi, \tilde{\pi}_\phi(a)\xi\rangle,$ for all $a \in \mathcal{A}.$

		Let $\left(\pi_\phi, V_\phi, K_\phi, D\right)$ be the Stinespring's representation for $\phi$ obtained as in Theorem \ref{prorep1}.
		Indeed, we obtain a Hilbert $\mathcal{B}$-module $K_\phi$ associated with $D_1 \otimes_\tau D$, a bounded operator $V_\phi \in \mathcal{L}_\mathcal{B}(D,K_\phi)$ (where $V_\phi := \eta(\xi \otimes 1_\mathcal{B})$) and a unital representation $\pi_\phi: \mathcal{A} \rightarrow \mathcal{L}_\mathcal{B}(K_\phi)$, where $\pi_\phi:=\rho_\phi \circ\gamma_\phi\circ\tilde{\pi}_\phi.$ 
		
		Let $K_\Phi$ be the Hilbert $\mathcal{B}$-module associated with $E \otimes_{\pi_\phi} K_\phi$. 
		Now, for $z \in E$, define $T_z: K_\phi\rightarrow E \otimes_{\pi_\phi} K_\phi$  by 
		\begin{equation*}
			T_z(k) = z \otimes k,\; \text{for all}\; k \in K_\phi.
		\end{equation*}
		Clearly, $T_z$ is $\mathcal{B}$-linear, in fact $T_z \in \mathcal{L}_\mathcal{B}(K_\phi, K_\Phi).$ Indeed, for  $q \in S(\mathcal{B}),$ we have
		\begin{equation*}
			\begin{split}
				\|T_z(k)\|_q^2 
				&= q(\langle z \otimes k, z \otimes k \rangle) \\ 
				&= q(\langle  k, \pi_\phi(\langle z,z \rangle) k \rangle') \\ 
				& \leq K {q(\langle k, k \rangle')} = K {\|k\|_q}^2.
			\end{split}
		\end{equation*}
		Also, for $z,z_0 \in E,\text{ and } k,k_0 \in K_\phi,$
		\begin{equation*}
			\begin{split}
				\langle  z_0 \otimes k_0, T_z (k) \rangle &= \langle k_0,\pi_\phi(\langle z_0 , z \rangle) k \rangle'\\
				&= \langle \pi_\phi(\langle z , z_0 \rangle) k_0, k \rangle'\\
				&= \langle {T_z}^*(z_0 \otimes k_0), k  \rangle.
			\end{split}
		\end{equation*}
		We get ${T_z}^*(z_0 \otimes k_0) = \pi_\phi(\langle z , z_0 \rangle) k_0$, and hence  $T_z \in \mathcal{L_B}(K_\phi, K_\Phi)$.

		Define $\pi_\Phi: E \rightarrow \mathcal{L}_\mathcal{B}(K_\phi, K_\Phi)$ by $$\pi_\Phi(z) = T_z,$$ for all $z \in E.$ 
		Observe that $\pi_\Phi$ is a $\pi_\phi$-morphism.  Indeed, for $z, w \in E$ and $k \in K_\phi,$ 
		$$\langle \pi_\Phi(z), \pi_\Phi(w) \rangle k =  \pi_\Phi(z)^*\pi_\Phi(w) k = T_z^*T_w k =T_z^*(w \otimes k) =\pi_\phi(\langle z,w \rangle) k.$$		
		
		Furthermore, by Lemma \ref{proiso}, there exists an isometry $v: E \otimes_{\pi_\phi} K_\phi \rightarrow [\Phi(E)]_\mathcal{B}$ such that $v\left(z \otimes (\xi \otimes 1_\mathcal{B}) \otimes (\xi \otimes 1_\mathcal{B})\right) = \Phi(z).$

		Let $W_\Phi:=v^*.$ For $z \in E \text{ and } d = (\xi \otimes b) \in D,$ we have
		\begin{align*}
			{W_\Phi}^*\pi_\Phi(z)V_\phi(d) &= v \left( T_z (\xi \otimes 1_\mathcal{B} \otimes d)\right) \\
			&= v\left( z \otimes \xi \otimes 1_\mathcal{B} \otimes d\right) \\
			&= \Phi(z)b.
		\end{align*}
		In particular, for $b = 1_\mathcal{B},$ then
		\begin{equation*}
			\Phi(z) = 	{W_\Phi}^*\pi_\Phi(z)V_\phi(\xi \otimes 1_\mathcal{B}),
		\end{equation*}
		for all $z \in E.$
	\end{proof}

	\begin{remark}
		Let $\phi$ and $\Phi$ be as in Theorem \ref{prorep2}.
		\begin{enumerate}

			\item The bounded linear operator $W_\Phi: [\Phi(E)]_\mathcal{B} \rightarrow K_\Phi$ is a coisometry. 
			
			\item If $E$ is full, that is $\langle E, E\rangle = \mathcal{A}$, then $\pi_\Phi: E \rightarrow \mathcal{L}_{\mathcal{B}}(K_\phi,K_\Phi)$ is a non-degenerate representation of $E$.
			Indeed, for $z \in E$ and $k_\phi \in K_\phi$, following the definition, we have, 
			$\pi_{\Phi}(z)(k_\phi) = z \otimes k_\phi$. Hence, $[\pi_\Phi(E)(K_\phi)]_\mathcal{B}= K_\Phi.$ 
			Since $\langle E, E\rangle = \mathcal{A},$ we have 
			\begin{align*}
				[{\pi_\Phi(E)}^*(K_\Phi)]_\mathcal{B} &= [{T_E}^*(K_\Phi)]_\mathcal{B} \\
				&= [\pi_\phi(\langle E, E\rangle) (K_\phi)]_\mathcal{B} \\
				&= [\tilde{\pi}_\phi(\langle E, E\rangle)X \otimes \mathcal{B} \otimes \xi \otimes \mathcal{B}  ]_\mathcal{B} \\
				&= K_\phi.
			\end{align*}

		\end{enumerate}

	\end{remark}

	\begin{definition}
		Let $\phi$ and $\Phi$ be as in Theorem \ref{prorep2}. We call the pair, $\{(\pi_{\phi}, V_\phi, K_\phi), (\pi_\Phi, W_\Phi, K_\Phi)\}$ a Stinespring's representation of $(\phi, \Phi)$ if the conditions $(1)-(4)$ of Theorem \ref{prorep2} are satisfied. This representation is said to be minimal if 
		\begin{enumerate}
			\item $[\pi_\phi(\mathcal{A})V_\phi( D)]_\mathcal{B} = K_\phi,$ and 
			\item $[\pi_\Phi(E)V_\phi (D)]_\mathcal{B} = K_\Phi.$
		\end{enumerate}
	\end{definition}

	\begin{remark}
		Observe that the pair $\{(\pi_{\phi}, V_\phi, K_\phi), (\pi_\Phi, W_\Phi, K_\Phi)\}$ obtained in Theorem \ref{prorep2} is a minimal representation of $(\phi,\Phi).$ Indeed, we have seen in Theorem \ref{prorep1}
		\begin{align*}
			[\pi_{\phi}(\mathcal{A})V_\phi (D)]_\mathcal{B} &= [(\rho_\phi \circ\gamma_\phi\circ\tilde{\pi}_\phi)(\mathcal{A})\eta( \xi \otimes 1_\mathcal{B})(D)]_\mathcal{B} \\
			&= [ \tilde{\pi}_\phi(\mathcal{A})\xi \otimes 1_\mathcal{B} \otimes D ]_\mathcal{B} \\
			&= K_\phi. 
		\end{align*}

Observe that for $e \in E, d \in D$ and $b \in \mathcal{B}$,
		\[ ea  \otimes (x \otimes b) \otimes d \;=\; e  \otimes (\tilde{\pi}_\phi(a)x \otimes b) \otimes d, 
		\qquad a\in \mathcal{A}.
		\]
To establish this identity, we compute inner products against the elementary tensors
$e'\otimes(x'\otimes b')\otimes d'$, which form a dense submodule of 
$E \otimes_{\pi_\phi} K_\phi$. For an arbitrary such element, we obtain
        \begin{align*}
\big\langle ea\otimes(x\otimes b)\otimes d,\; e'\otimes(x'\otimes b')\otimes d'\big\rangle 
&= \big\langle (x\otimes b)\otimes d,\; \pi_\phi(\langle ea,e'\rangle)\,(x'\otimes b')\otimes d'\big\rangle'
\\[6pt]
&= \big\langle (x\otimes b)\otimes d,\; \pi_\phi(a^*\langle e,e'\rangle)\,(x'\otimes b')\otimes d'\big\rangle'\\[6pt]
&= \big\langle (x\otimes b)\otimes d,\; \pi_\phi(a^*)\,\pi_\phi(\langle e,e'\rangle)\,(x'\otimes b')\otimes d'\big\rangle'
\\[6pt]
&= \big\langle \pi_\phi(a)\big((x\otimes b)\otimes d\big),\; \pi_\phi(\langle e,e'\rangle)\,(x'\otimes b')\otimes d'\big\rangle'
\\[6pt]
&= \big\langle \big(\tilde{\pi}_\phi(a)x\otimes b\big)\otimes d,\; \pi_\phi(\langle e,e'\rangle)\,(x'\otimes b')\otimes d'\big\rangle'
\\[6pt]
&= \big\langle e\otimes(\tilde{\pi}_\phi(a)x\otimes b)\otimes d,\; e'\otimes(x'\otimes b')\otimes d'\big\rangle
\end{align*}
        
     Consequently, applying the preceding observation regarding the action within the tensor product, we have
		\begin{align*}
		[\pi_{\Phi}(E)V_\phi (D)]_\mathcal{B} &= [T_E\left(\xi \otimes 1_\mathcal{B} \otimes D) \right)]_\mathcal{B} \\
			&= [E \otimes  \xi \otimes 1_\mathcal{B} \otimes D]_\mathcal{B} \\
			&= E \otimes K_\phi \\
			&= K_\Phi. 
		\end{align*}

	\end{remark}

	We now show that the uniqueness of the minimal Stinespring's representation is up to the unitary equivalence.
	
	\begin{theorem}\label{uniequi}
		Let $\phi$ and $\Phi$ admit the minimal Stinespring representations 
		$(\pi_\phi, V_\phi, K_\phi)$ and $(\pi_\Phi, W_\Phi, K_\Phi)$, respectively. 
		Suppose further that $\{(\pi_{\mathcal{A}}, V', K'), (\pi_E, W', \tilde{K})\}$ 
		is another minimal Stinespring representation associated with $(\phi,\Phi)$. Then there are two unitary operators $U_1: K_\phi \rightarrow K^\prime$ and $U_2: K_\Phi \rightarrow \tilde{K}$ such that 
		\begin{enumerate}
			\item $V^\prime = U_1V_\phi$.
			\item $U_1\pi_{\phi}(a) = \pi_{\mathcal{A}}(a)U_1,$ for all $a \in \mathcal{A}.$
			\item $U_2\pi_{\Phi}(z) = \pi_{E}(z)U_1,$ for all $z \in E.$
			\item $W^\prime = U_2W_\Phi$. 
		\end{enumerate}
	\end{theorem}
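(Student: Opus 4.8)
The strategy is the familiar ``uniqueness of minimal dilations'' argument, carried out twice: once at the level of the algebra representation to produce $U_1$, and once at the level of the module to produce $U_2$; the four identities are then read off, property $(4)$ being the only one requiring real work. Throughout I use that every inner product arising can be rewritten purely in terms of $\phi$ and $\Phi$, so that it takes the same value whether computed in the first or in the second representation.

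First I would define $U_1$ on the dense submodule $[\pi_\phi(\mathcal{A})V_\phi D]$ of $K_\phi$ (dense by minimality) by
\begin{equation*}
	U_1\Big(\sum_i \pi_\phi(a_i)V_\phi d_i\Big) := \sum_i \pi_{\mathcal{A}}(a_i)V' d_i .
\end{equation*}
Since $\pi_\phi$ is a $^*$-morphism and $V_\phi^*\pi_\phi(a)V_\phi=\phi(a)I_D$, one gets
\begin{equation*}
	\Big\langle \sum_i \pi_\phi(a_i)V_\phi d_i,\ \sum_j \pi_\phi(a_j')V_\phi d_j'\Big\rangle=\sum_{i,j}\big\langle d_i,\ \phi(a_i^*a_j')\,d_j'\big\rangle,
\end{equation*}
and the identical expression results for the primed data, because $\pi_{\mathcal{A}}$ is a $^*$-morphism with $V'^*\pi_{\mathcal{A}}(a)V'=\phi(a)I_D$. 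Hence $U_1$ preserves inner products (in particular is well defined, sending a null vector to a null vector), and by the two density statements $[\pi_\phi(\mathcal{A})V_\phi D]=K_\phi$ and $[\pi_{\mathcal{A}}(\mathcal{A})V'D]=K'$ it extends to a unitary $U_1\colon K_\phi\to K'$. Property $(1)$ is immediate from unitality, $U_1V_\phi d=U_1\pi_\phi(1_{\mathcal A})V_\phi d=\pi_{\mathcal A}(1_{\mathcal A})V'd=V'd$, and property $(2)$ follows by evaluating $U_1\pi_\phi(a)$ and $\pi_{\mathcal A}(a)U_1$ on a typical vector $\pi_\phi(a')V_\phi d$ and using $\pi_\phi(a)\pi_\phi(a')=\pi_\phi(aa')$ (and the same for $\pi_{\mathcal A}$).

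Next I would build $U_2$ on the dense submodule $[\pi_\Phi(E)V_\phi D]\subseteq K_\Phi$ by $\pi_\Phi(z)V_\phi d\mapsto \pi_E(z)V'd$. Here the representation-independence comes from the $\pi_\phi$-morphism identity $\pi_\Phi(z)^*\pi_\Phi(z')=\pi_\phi(\langle z,z'\rangle)$ supplied by Lemma \ref{construction}, whence
\begin{equation*}
	\big\langle \pi_\Phi(z)V_\phi d,\ \pi_\Phi(z')V_\phi d'\big\rangle=\big\langle d,\ \phi(\langle z,z'\rangle)\,d'\big\rangle=\big\langle \pi_E(z)V'd,\ \pi_E(z')V'd'\big\rangle,
\end{equation*}
using also $\pi_E(z)^*\pi_E(z')=\pi_{\mathcal A}(\langle z,z'\rangle)$ and $V'^*\pi_{\mathcal A}(\cdot)V'=\phi(\cdot)I_D$. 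As before, minimality ($[\pi_\Phi(E)V_\phi D]=K_\Phi$ and $[\pi_E(E)V'D]=\tilde K$) makes $U_2$ a unitary $K_\Phi\to\tilde K$. Property $(3)$ is then checked on the dense submodule of $K_\phi$: using the module identities $\pi_\Phi(za)=\pi_\Phi(z)\pi_\phi(a)$ and $\pi_E(za)=\pi_E(z)\pi_{\mathcal A}(a)$,
\begin{equation*}
	U_2\pi_\Phi(z)\big(\pi_\phi(a)V_\phi d\big)=U_2\pi_\Phi(za)V_\phi d=\pi_E(za)V'd=\pi_E(z)U_1\big(\pi_\phi(a)V_\phi d\big),
\end{equation*}
and I extend by continuity. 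In particular this establishes the intertwining $U_2\pi_\Phi(z)V_\phi d=\pi_E(z)V'd$ for all $z\in E$, $d\in D$, which feeds into the last step.

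Finally, property $(4)$ is the main obstacle. Because $W_\Phi$ is a coisometry (the remark following Theorem \ref{prorep2}), applying $W_\Phi$ to $\Phi(z)=W_\Phi^*\pi_\Phi(z)V_\phi(1_{\mathcal B}\otimes\xi)$ and using $W_\Phi W_\Phi^*=I_{K_\Phi}$ gives $W_\Phi\Phi(z)=\pi_\Phi(z)V_\phi(1_{\mathcal B}\otimes\xi)$, and likewise $W'\Phi(z)=\pi_E(z)V'(1_{\mathcal B}\otimes\xi)$. Combining these with the intertwining just obtained and with property $(1)$,
\begin{equation*}
	U_2W_\Phi\Phi(z)=U_2\pi_\Phi(z)V_\phi(1_{\mathcal B}\otimes\xi)=\pi_E(z)V'(1_{\mathcal B}\otimes\xi)=W'\Phi(z),
\end{equation*}
so $U_2W_\Phi$ and $W'$ agree on $\Phi(E)$. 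The delicate part is to promote this to equality on all of $F$. For that I would first show, via the coisometry relations together with the now-familiar computation
\begin{equation*}
	\big\langle \Phi(w),\ W_\Phi^*\pi_\Phi(z)V_\phi d\big\rangle=\big\langle \pi_\Phi(w)V_\phi(1_{\mathcal B}\otimes\xi),\ \pi_\Phi(z)V_\phi d\big\rangle=\big\langle 1_{\mathcal B}\otimes\xi,\ \phi(\langle w,z\rangle)\,d\big\rangle,
\end{equation*}
that the initial space $\overline{\operatorname{ran}W_\Phi^*}$ and the corresponding $\overline{\operatorname{ran}W'^*}$ both coincide with the closed subspace of $F$ determined by $\phi$ and $\Phi$ alone; on this common subspace $U_2W_\Phi$ and $W'$ agree by the displayed identity and continuity, while on its orthogonal complement both coisometries vanish, since that complement is their common kernel. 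Using the orthogonal decomposition of $F$ furnished by the projection $W_\Phi^*W_\Phi$ then yields $W'=U_2W_\Phi$, which is property $(4)$ and completes the proof.
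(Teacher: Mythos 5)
Your constructions of $U_1$ and $U_2$, and your verification of properties $(1)$--$(3)$, are essentially the paper's own argument; in fact your direct check of $(3)$ on vectors $\pi_\phi(a)V_\phi d$ via the module identities $\pi_\Phi(za)=\pi_\Phi(z)\pi_\phi(a)$ and $\pi_E(za)=\pi_E(z)\pi_{\mathcal{A}}(a)$ is cleaner than the paper's computation, which only compares the two inner products $\langle \pi_E(z)U_1x,\pi_E(z)U_1x\rangle$ and $\langle U_2\pi_\Phi(z)x,U_2\pi_\Phi(z)x\rangle$. The divergence, and the problem, is property $(4)$.

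Your argument for $(4)$ rests on the relation $W'W'^{*}=I_{\tilde K}$, invoked when you write ``likewise $W'\Phi(z)=\pi_E(z)V'(1_{\mathcal{B}}\otimes\xi)$''. But being a coisometry is not part of the definition of a (minimal) Stinespring representation of $(\phi,\Phi)$: that definition asks only for conditions $(1)$--$(4)$ of Theorem \ref{prorep2} together with the two density conditions. The remark following Theorem \ref{prorep2} establishes coisometry for the \emph{particular} $W_\Phi$ constructed there; nothing of the sort is assumed about the competing $W'$, so this step is unjustified. (Notice that the desired conclusion $W'=U_2W_\Phi$ itself forces $W'$ to be a coisometry, so you are in effect assuming a nontrivial part of what is to be proved.) Second, even granting both coisometry relations, your passage from agreement on $\Phi(E)$ to agreement on all of $F$ does not go through. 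The initial space $\overline{\operatorname{ran}}\,W_\Phi^{*}$ is the range of the isometry $v$ of Lemma \ref{proiso}, i.e.\ the closed span of the vectors $\phi(a_2)b_2\Phi(za_1)b_1$, which in general strictly contains the closed span of $\Phi(E)$ (it involves the left $\mathcal{B}$-action on $F$ and the right $\mathcal{A}$-action on $E$). Your displayed identity only computes pairings of $W_\Phi^{*}\pi_\Phi(z)V_\phi d$ against the vectors $\Phi(w)$, and such pairings cannot determine an operator on this larger submodule; worse, the hypotheses give no information whatsoever about $W'^{*}$ off the span of $\{\pi_E(z)V'(1_{\mathcal{B}}\otimes\xi):z\in E\}$, so the asserted equality $\overline{\operatorname{ran}}\,W'^{*}=\overline{\operatorname{ran}}\,W_\Phi^{*}$ is precisely what is missing, not something that can be ``first shown''.

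The structural point is that minimality gives you density in $K_\Phi$ and $\tilde K$, not in $F$, so the uniqueness argument must be run on the dilation side. That is what the paper does: it shows $(W'^{*}U_2-W_\Phi^{*})$ annihilates the set $\pi_\Phi(E)V_\phi D$, which is dense in $K_\Phi$ by minimality, concludes $W'^{*}U_2=W_\Phi^{*}$ on $K_\Phi$, and obtains $(4)$ by taking adjoints and using that $U_2$ is unitary. To run this, the paper evaluates both $W_\Phi^{*}$ and $W'^{*}$ on vectors $\pi(z)V(1_{\mathcal{B}}\otimes\tilde{\pi}_\phi(a)(\xi b))$ via the identity $\phi(a)b\Phi(z)=W^{*}\pi(z)V(1_{\mathcal{B}}\otimes\tilde{\pi}_\phi(a)(\xi b))$ --- a strengthening of condition $(4)$ of Theorem \ref{prorep2} that is proved there for the constructed pair and is tacitly assumed for the competitor. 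So some additional input beyond the bare definition is needed in either treatment; but the paper's route through $K_\Phi$, where minimality supplies density, is the one that closes, whereas your decomposition of $F$ cannot be completed from the stated hypotheses.
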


	\begin{proof}
		It is easy to see the structure of the maps through the following figure.		
		
		\tikzstyle{line} = [draw, -latex'] 
		
		\begin{figure}[hbt!]
			\begin{tikzpicture}[scale=0.60, arr/.style = {-Stealth}]
				\node (1) at (0,3.5) {$D$};
				\node (2) at (4,3.5) {$K_{\phi}$};
				\node (3) at (8,3.5) {$K_\phi$};
				\node (4) at (12,3.5) {$K_\Phi$};
				\node (5) at (16,3.5) {$[\Phi(E)]_\mathcal{B}$};
				\node (6) at (4,0) {$K^\prime$};
				\node (7) at (8,0) {$K^\prime$};
				\node (8) at (12,0) {$\tilde{K}$};
				\footnotesize
				\draw[arr]   (1) to ["$V_\phi$"]    (2);
				\draw[arr]  (2) to ["$\pi_{\phi}(a)$"]    (3);
				\draw[arr]   (3) to ["$\pi_\Phi(z)$"]    (4);
				\draw[arr]   (5) to ["$W_\Phi$"]    (4);
				\draw[arr]   (1) to ["$V^\prime$"]    (6);
				\draw[arr]   (6) to ["$\pi_{\mathcal{A}}(a)$"]    (7);
				\draw[arr]   (7) to ["$\pi_E(z)$"]    (8);
				\draw[arr]   (5) to ["$W^\prime$"]    (8);
				\draw[arr]   (2) to ["$U_1$"]    (6);
				\draw[arr]   (4) to ["$U_2$"]    (8);
				\draw[arr]   (3) to ["$U_1$"]    (7);

			\end{tikzpicture}
			
		\end{figure}

		Define $U_1: K_\phi \rightarrow K^\prime $ as follows
		$$U_1\left(\sum_{i=1}^n\pi_{\phi}(a_i)V_\phi d_i\right) = \sum_{i=1}^{n} \pi_{\mathcal{A}}(a_i)V^\prime d_i,$$
		for all $a_1,\dots a_n \in \mathcal{A}$ and $d_1, \dots, d_n \in D, n \geq 1.$ Since $[\pi_\phi(\mathcal{A})V_\phi D]_\mathcal{B} = K_\phi,$ we can uniquely extend this operator to an operator from $K_\phi$ to $K^\prime$.
		For any $d \in D$, substitute $a = 1_\mathcal{A} $ in $U_1(\pi_{\phi}(a)V_\phi d) = \pi_\mathcal{A}(a)V^\prime d$ to obtain $V^\prime = U_1V_\phi$.

       Note that $U_1$ is adjointable, and its adjoint is determined on elementary vectors by
\[
{U_1}^{*}\!\left(\pi_{\mathcal{A}}(a)V' d\right)
   = \pi_{\phi}(a)\,V_{\phi} d,
   \qquad a \in \mathcal{A},\; d \in D.
\]

		Moreover, for $a_1,\dots a_n \in \mathcal{A}$ and $d_1, \dots, d_n \in D,$ we have 
		\begin{align*}
				\left\langle U_1\left(\sum_{i=1}^n\pi_{\phi}(a_i)V_\phi d_i\right), U_1\left(\sum_{i=1}^n\pi_{\phi}(a_i)V_\phi d_i\right) \right \rangle &=  \sum_{i,j=1}^n \big \langle \pi_{\mathcal{A}}(a_i)V^\prime d_i, \pi_{\mathcal{A}}(a_j)V^\prime d_j \big\rangle   \\
				&= \sum_{i,j=1}^n \big \langle V^\prime d_i, \pi_{\mathcal{A}}({a_i}^*a_j)V^\prime d_j \big\rangle   \\
				&= \sum_{i,j=1}^n \big \langle  d_i, {V^\prime}^*\pi_{\mathcal{A}}({a_i}^*a_j)V^\prime d_j \big\rangle   \\
				&= \sum_{i,j=1}^n \big \langle  d_i, \phi({a_i}^*a_j) d_j \big\rangle   \\
				&= \sum_{i,j=1}^n \big \langle  d_i, {V_\phi}^*\pi_\phi({a_i}^*a_j)V_\phi d_j \big\rangle   \\
				&= \sum_{i,j=1}^n \big \langle  \pi_\phi(a_i)V_\phi d_i, \pi_\phi(a_j)V_\phi d_j \big\rangle'   \\
				&=  \left\langle \sum_{i=1}^n\pi_{\phi}(a_i)V_\phi d_i, \sum_{i=1}^n\pi_{\phi}(a_i)V_\phi d_i \right \rangle'.
		\end{align*}
		This shows that $U_1$ is an isometry. 
		
		Next, define $U_2: K_\Phi \rightarrow \tilde{K} $ by
		\begin{equation*}
			U_2\left(\sum_{i=1}^n\pi_{\Phi}(z_i)V_\phi d_i\right) = \sum_{i=1}^{n} \pi_{E}(z_i)V^\prime d_i,
		\end{equation*}
		for all $z_1,\dots z_n \in E$ and $d_1, \dots, d_n \in D, n \geq 1.$ Again, since $[\pi_\Phi(E)V_\phi D]_\mathcal{B} = K_\Phi,$ we can uniquely extend this operator to an operator from $K_\Phi$ to $\tilde{K}$. 

Note that $U_2$ is adjointable, and its adjoint is determined on elementary vectors by
\[
{U_2}^{*}\!\left(\pi_E(z)V' d\right)
   = \pi_{\Phi}(z)\,V_{\phi} d,
   \qquad z \in E,\; d \in D.
\]

        Using the fact that $\pi_{\Phi}$ is a $\pi_{\phi}$-morphism and $\pi_E$ is a $\pi_{\mathcal{A}}$-morphism, we observe that $U_2$ is an isometry.
		Indeed, for $z_1,\dots z_n \in E$ and $d_1, \dots, d_n \in D,$ we have

		\begin{align*}
				\left\langle U_2\left(\sum_{i=1}^n\pi_{\Phi}(z_i)V_\phi d_i\right), U_2\left(\sum_{i=1}^n\pi_{\Phi}(z_i)V_\phi d_i\right) \right \rangle &=  \sum_{i,j=1}^n \big \langle \pi_{E}(z_i)V^\prime d_i, \pi_{E}(z_j)V^\prime d_j \big\rangle   \\
				&= \sum_{i,j=1}^n \big \langle V^\prime d_i, \pi_E(z_i)^*\pi_{E}(z_j)V^\prime d_j \big\rangle   \\
				&= \sum_{i,j=1}^n \big \langle  d_i, {V^\prime}^*\langle \pi_E(z_i), \pi_{E}(z_j) \rangle V^\prime d_j \big\rangle   \\
				&= \sum_{i,j=1}^n \big \langle  d_i, {V^\prime}^* \pi_\mathcal{A}(\langle z_i, z_j \rangle) V^\prime d_j \big\rangle   \\
				&= \sum_{i,j=1}^n \big \langle  d_i, {V_\phi}^* \pi_\phi(\langle z_i, z_j \rangle) V_\phi d_j \big\rangle   \\
				&= \sum_{i,j=1}^n \big \langle  d_i, {V^\prime}^*\langle \pi_\Phi(z_i), \pi_\Phi(z_j) \rangle V^\prime d_j \big\rangle   \\		
				&= \sum_{i,j=1}^n \big \langle V^\prime d_i, \pi_\Phi(z_i)^*\pi_\Phi(z_j)V^\prime d_j \big\rangle   \\	
				&=  \left\langle \sum_{i=1}^n\pi_\Phi(z_i)V_\phi d_i, \sum_{i=1}^n\pi_\Phi(z_i)V_\phi d_i \right \rangle .	
		\end{align*}
		
		Thus, it can be extended to a unitary $U_2$ from $K_\Phi$ to $\tilde{K}$.
		
		Furthermore, using $(2)$, for $z \in E, a_1 \dots a_n \in \mathcal{A}$ and $d_1, \dots, d_n \in D,$ we have
		\begin{equation*}
			\begin{split}
				&	\left\langle \pi_{E}(z) U_1\left(\sum_{i=1}^n\pi_{\phi}(a_i)V_\phi d_i\right),\pi_E(z) U_1\left(\sum_{i=1}^n\pi_{\phi}(a_i)V_\phi d_i\right)\right\rangle \\
				&= \left \langle U_1\left(\sum_{i=1}^n\pi_{\phi}(a_i)V_\phi d_i\right), \left\langle \pi_E(z), \pi_E(z) \right\rangle U_1\left(\sum_{i=1}^n\pi_{\phi}(a_i)V_\phi d_i\right)\right\rangle  \\
				&=\left \langle \sum_{i=1}^n\pi_{\phi}(a_i)V_\phi d_i, {U_1}^*  \pi_\mathcal{A}( \langle z, z \rangle) U_1\left(\sum_{i=1}^n\pi_{\phi}(a_i)V_\phi d_i\right)\right\rangle'  \\
				&= \left \langle \sum_{i=1}^n\pi_{\phi}(a_i)V_\phi d_i, \pi_\phi( \langle z, z \rangle) \left(\sum_{i=1}^n\pi_{\phi}(a_i)V_\phi d_i\right)\right\rangle'  \\
				&= \left \langle \sum_{i=1}^n\pi_{\phi}(a_i)V_\phi d_i, \left\langle \pi_\Phi(z), \pi_\Phi(z) \right\rangle \left(\sum_{i=1}^n\pi_{\phi}(a_i)V_\phi d_i\right) \right\rangle'  \\
				&= \left \langle \sum_{i=1}^n\pi_{\phi}(a_i)V_\phi d_i,  {\pi_\Phi(z)}^*{U_2}^*U_2\pi_\Phi(z)  \left(\sum_{i=1}^n\pi_{\phi}(a_i)V_\phi d_i\right) \right\rangle'  \\
				&= \left \langle  U_2\pi_\Phi(z) \left(\sum_{i=1}^n\pi_{\phi}(a_i)V_\phi d_i\right), U_2\pi_\Phi(z)  \left(\sum_{i=1}^n\pi_{\phi}(a_i)V_\phi d_i \right) \right\rangle.	
			\end{split}
		\end{equation*}
		Therefore, $U_2\pi_{\Phi}(z) = \pi_{E}(z)U_1,$ for all $z \in E.$

		Moreover, using $(1)$ and $(3)$, for $z \in E, a \in \mathcal{A}$ and $d =  \xi \otimes b \in D,$ we have
		\begin{equation*}
			\begin{split}
				\Phi(z)b
				&= {W^\prime}^*\pi_{E}(z)V^\prime\left(d\right) \\
				&=  {W^\prime}^*U_2\pi_{\Phi}(z){U_1}^*V^\prime\left( d\right) \\
				&={W^\prime}^*U_2\pi_{\Phi}(z)V_\phi\left(d\right), \text{ and } \\
				\Phi(z)b
				&= {W_\Phi}^*\pi_{\Phi}(z)V_\phi\left(d\right). 
			\end{split}
		\end{equation*}

		This implies, $\left({W^\prime}^*U_2 - {W_\Phi}^*\right)\pi_{\Phi}(z)V_\phi\left(d\right) = 0.$ Since $[\pi_{\Phi}(E)V_\phi D]_\mathcal{B} = K_\Phi$, we obtain ${W^\prime}^*U_2 = {W_\Phi}^*.$

	\end{proof}	
	
	\begin{remark}
		In the definitions of the operators \(U_1\) and \(U_2\) in Theorem \ref{uniequi}, the linear span is
		understood to be taken over \(\mathcal{B}\) on the
		right; these algebra elements are absorbed into the representation, so they are
		not written explicitly.
	\end{remark}

	As a consequence of Theorem \ref{uniequi}, we extract the following corollary, which stands alone as a separate result.
	
	\begin{corollary}
		Let $\mathcal{A}, \mathcal{B}$ be  pro-$C^*$-algebras and $\phi: \mathcal{A} \rightarrow \mathcal{B}$ be a continuous, unital, completely positive map. Let $(\pi_\phi,V_\phi,K_\phi)$ be obtained as in Theorem \ref{prorep1} and let $(\pi_\phi^\prime, V_\phi^\prime, K^\prime)$ be another such triplet. Then there exists a unitary $U: K_\phi \rightarrow K^\prime$ such that $V_\phi^\prime = UV_\phi$ and  $U\pi_\phi(a)U^* = \pi_\phi^\prime(a)$, for all $a \in \mathcal{A}.$
	\end{corollary}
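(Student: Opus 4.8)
The plan is to realize $U$ as the natural intertwiner on the dense submodules furnished by the minimality clause of Theorem \ref{prorep1}, exactly as the operator $U_1$ is built in the proof of Theorem \ref{uniequi}. First I would record that, by Theorem \ref{prorep1}, both triples satisfy the defining identity $\phi(a)I_D = V_\phi^*\pi_\phi(a)V_\phi = {V_\phi^\prime}^*\pi_\phi^\prime(a)V_\phi^\prime$ for all $a \in \mathcal{A}$, together with the minimality conditions $\overline{span}\{\pi_\phi(\mathcal{A})V_\phi D\} = K_\phi$ and $\overline{span}\{\pi_\phi^\prime(\mathcal{A})V_\phi^\prime D\} = K^\prime$. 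On the dense submodule $span\{\pi_\phi(a)V_\phi d : a \in \mathcal{A},\, d \in D\}$ I would then define
\[
U\Big(\sum_{i=1}^n \pi_\phi(a_i)V_\phi d_i\Big) := \sum_{i=1}^n \pi_\phi^\prime(a_i)V_\phi^\prime d_i.
\]

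The heart of the argument is that $U$ preserves the $\mathcal{B}$-valued inner product, which simultaneously shows it is well defined and isometric. For this I would compute, using $\pi_\phi(a_i)^* = \pi_\phi(a_i^*)$ and the defining identity,
\begin{align*}
\Big\langle \sum_i \pi_\phi(a_i)V_\phi d_i, \sum_j \pi_\phi(a_j)V_\phi d_j\Big\rangle
&= \sum_{i,j} \big\langle d_i, V_\phi^*\pi_\phi(a_i^* a_j)V_\phi d_j\big\rangle \\
&= \sum_{i,j} \big\langle d_i, \phi(a_i^* a_j) d_j\big\rangle,
\end{align*}
and the identical computation with the primed data yields the same quantity $\sum_{i,j}\langle d_i, \phi(a_i^* a_j) d_j\rangle$. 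Hence $U$ is a $\mathcal{B}$-linear, inner-product-preserving map; in particular it annihilates the ambiguity in writing an element as a finite sum, so it is well defined, and it is isometric with respect to every seminorm $\|\cdot\|_q$, $q \in S(\mathcal{B})$. By the two minimality conditions its domain and range are dense, so $U$ extends to an isometry $K_\phi \to K^\prime$ with dense range; since a dense-range inner-product-preserving module map between Hilbert modules is adjointable with $U^* = U^{-1}$, the map $U$ is a unitary in $\mathcal{L}_\mathcal{B}(K_\phi, K^\prime)$.

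Finally, the two asserted relations fall out of the definition of $U$. Setting $a = 1_\mathcal{A}$ in $U(\pi_\phi(a)V_\phi d) = \pi_\phi^\prime(a)V_\phi^\prime d$ gives $UV_\phi d = V_\phi^\prime d$ for all $d \in D$, that is, $V_\phi^\prime = UV_\phi$. For the intertwining relation I would check, for $a \in \mathcal{A}$ and a typical element $\sum_i \pi_\phi(a_i)V_\phi d_i$, that
\[
U\pi_\phi(a)\Big(\sum_i \pi_\phi(a_i)V_\phi d_i\Big) = U\Big(\sum_i \pi_\phi(aa_i)V_\phi d_i\Big) = \sum_i \pi_\phi^\prime(aa_i)V_\phi^\prime d_i = \pi_\phi^\prime(a)U\Big(\sum_i \pi_\phi(a_i)V_\phi d_i\Big);
\]
by density this yields $U\pi_\phi(a) = \pi_\phi^\prime(a)U$, whence $U\pi_\phi(a)U^* = \pi_\phi^\prime(a)$ for all $a \in \mathcal{A}$. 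I do not expect a genuine obstacle here, as the corollary is precisely the $\phi$-half of Theorem \ref{uniequi} stripped of the $\Phi$-data; the only point requiring mild care in the pro-$C^*$ setting is that isometry and surjectivity are verified at the level of the $\mathcal{B}$-valued inner product (uniformly over the seminorms $q \in S(\mathcal{B})$) rather than for a single norm, and that a dense-range isometric module map is automatically unitary.
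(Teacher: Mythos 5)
Your proposal is correct and matches the paper's route: the paper derives this corollary directly from Theorem \ref{uniequi}, whose proof constructs $U_1$ on the dense submodule $\mathrm{span}\{\pi_\phi(\mathcal{A})V_\phi D\}$ exactly as you build $U$, using the identity $\phi(a)I_D = V_\phi^*\pi_\phi(a)V_\phi$ to get inner-product preservation, minimality for density of domain and range, and the substitution $a = 1_\mathcal{A}$ for $V_\phi^\prime = UV_\phi$. Your version simply inlines that argument (and is, if anything, slightly more careful about well-definedness and the extension to a unitary in the seminorm topology), so there is nothing to correct.
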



    	\begin{center}
		\textbf{Statements and Declarations}
	\end{center}
	
	\textbf{Funding}\\
	The first author's research is supported by the Prime Minister’s Research Fellowship (PMRF ID: 2001291) provided by the Ministry of Human Resource Development (MHRD), Govt. of India.\\

	\textbf{Data Availability}\\
	Data sharing not applicable to this article as no datasets were generated or
	analysed during the current study\\
	
	\textbf{Conflict of interest}\\
	On behalf of all authors, the corresponding author states that there is no conflict of interest.

	\bibliographystyle{amsplain}

\end{document}